\newif\ifisarxiv
\pgfplotsset{compat=newest}
\def\gbh {\widehat{\g}}
\def\gbt {\tilde{\g}}
\def\pbt {\tilde{\p}}
\def\cmark{\Green{\checkmark}}
\def\xmark{\Red{\large\sffamily x}}
\newif\ifDRAFT
\newcommand{\marrow}{\marginpar[\hfill$\longrightarrow$]{$\longleftarrow$}}
\newcommand{\niceremark}[3]
   {\textcolor{red}{\textsc{#1 #2:} \marrow\textsf{#3}}}
\newcommand{\ken}[2][says]{\niceremark{Ken}{#1}{#2}}
\newcommand{\michael}[2][says]{\niceremark{Michael}{#1}{#2}}
\newcommand{\michal}[2][says]{\niceremark{Michal}{#1}{#2}}
\newcommand{\feynman}[2][says]{\niceremark{Feynman}{#1}{#2}}
\newcommand{\ken}[1]{}
\newcommand{\michael}[1]{}
\newcommand{\michal}[1]{}
\newcommand{\feynman}[1]{}
\def\ee{\mathrm{e}}
\def\checkmark{\tikz\fill[scale=0.4](0,.35) -- (.25,0) --
(1,.7) -- (.25,.15) -- cycle;}
\def\xib{\boldsymbol\xi}
\def\S{\mathbf{S}}
\def\xbt{\tilde{\x}}
\def\g{{\mathbf{g}}}
\def\Nc{\mathcal{N}}
\def\p{\mathbf p}
\def\H{\mathbf H}
\def\Hbh{\widehat{\H}}
\def\Hbt{\tilde{\H}}
\newcommand{\BlackBox}{\rule{1.5ex}{1.5ex}}  
\DeclareMathOperator*{\argmin}{\mathop{\mathrm{argmin}}}
\def\x{\mathbf x}
\def\y{\mathbf y}
\def\z{\mathbf z}
\def\a{\mathbf a}
\def\v{\mathbf v}
\def\zero{\mathbf 0}
\def\B{\mathbf B}
\def\A{\mathbf A}
\def\C{\mathbf C}
\def\U{\mathbf U}
\def\G{\mathbf G}
\def\D{\mathbf D}
\def\V{\mathbf V}
\def\M{\mathbf M}
\def\Dc{\mathcal{D}}
\def\Z{\mathbf Z}
\def\I{\mathbf I}
\def\A{\mathbf A}
\def\E{\mathbb E}
\def\R{\mathbb R}
\def\Pr{\mathrm{Pr}}
\let\origtop\top
\renewcommand\top{{\scriptscriptstyle{\origtop}}} 
\definecolor{silver}{cmyk}{0,0,0,0.3}
\definecolor{yellow}{cmyk}{0,0,0.9,0.0}
\definecolor{reddishyellow}{cmyk}{0,0.22,1.0,0.0}
\definecolor{black}{cmyk}{0,0,0.0,1.0}
\definecolor{darkYellow}{cmyk}{0.2,0.4,1.0,0}
\definecolor{orange}{cmyk}{0.0,0.7,0.9,0}
\definecolor{darkSilver}{cmyk}{0,0,0,0.1}
\definecolor{grey}{cmyk}{0,0,0,0.5}
\definecolor{darkgreen}{cmyk}{0.9,0,1,0} 
\newcommand{\Red}[1]{{\color{red}  {#1}}}
\newcommand{\Green}[1]{{\color{darkgreen}  {#1}}}
\newcommand{\white}[1]{{\textcolor{white}{#1}}}
\newenvironment{proof}{\par\noindent{\bf Proof\ }}{\hfill\BlackBox\\[2mm]}
\newtheorem{theorem}{Theorem}
\newtheorem{condition}{Condition}
\newtheorem{lemma}{Lemma}
\newtheorem{remark}{Remark}
\newtheorem{definition}{Definition}
\newtheorem{assumption}{Assumption}
\def\ch{{\alpha}}
\title{Stochastic Variance-Reduced Newton:\\ Accelerating
  Finite-Sum Minimization with Large Batches}
\author{%
  \textbf{Micha{\l } Derezi\'{n}ski}\\
  Department of Electrical Engineering \& Computer Science\\
  University of Michigan\\
  \texttt{derezin@umich.edu}\\
}
\date{}
\begin{document}

\maketitle

\begin{abstract}
  Stochastic variance reduction has proven effective at accelerating
  first-order algorithms for solving convex finite-sum optimization
  tasks such as empirical risk minimization. Incorporating
  second-order information has proven helpful in further improving the
  performance of these first-order methods. Yet, comparatively little
  is known about the benefits of using variance reduction to
  accelerate popular stochastic second-order methods such as
  Subsampled Newton. To address this, we propose Stochastic
  Variance-Reduced Newton (SVRN), a finite-sum minimization algorithm
  that provably accelerates existing stochastic Newton methods from
  $O(\alpha\log(1/\epsilon))$ to
  $O\big(\frac{\log(1/\epsilon)}{\log(n)}\big)$ passes over the data,
  i.e., by a factor of $O(\alpha\log(n))$, where $n$ is the number of
  sum components and $\alpha$ is the approximation factor in the
  Hessian estimate. Surprisingly, this acceleration gets more significant the larger the
data size $n$, which is a unique property of SVRN. Our algorithm retains the key advantages of Newton-type methods, such as easily parallelizable large-batch operations and a simple unit step size. We use SVRN to accelerate Subsampled Newton and Iterative Hessian Sketch algorithms, and show that it compares favorably to popular first-order methods with variance~reduction.
\end{abstract}

\section{Introduction}
\label{s:intro}

Consider a convex finite-sum minimization task:
\begin{align}
  \text{find}\quad \x^*=\argmin_{\x\in\R^d} f(\x)\quad \text{ for }\quad  f(\x) =
  \frac1n\sum_{i=1}^n\psi_i(\x).\label{eq:finite-sum}
\end{align}
This optimization task naturally arises in machine learning through empirical risk minimization,
where $\x$ is the model parameter vector and each function $\psi_i(\x)$
corresponds to the loss incurred by the model on the $i$-th element in a
training data set (e.g., square loss for regression, or logistic loss
for classification). Many other optimization tasks, such as solving
semi-definite programs and portfolio optimization, can be cast in this
general form. Our goal is to find an $\epsilon$-approximate solution,
i.e., $\xbt$ such that $f(\xbt) - f(\x^*) \leq \epsilon$.

Classical iterative optimization methods (such as gradient descent and
Newton's method) use first/second-order information of function $f$ to construct a sequence
$\x_0,\x_1,...$ that converges to $\x^*$. However, this does not leverage the finite-sum structure of the
problem. Thus, extensive literature has been dedicated to efficiently solving
finite-sum minimization tasks using stochastic optimization methods,
which use first/second-order information of randomly sampled
component functions $\psi_i$, that can often be computed much faster
than the entire function $f$. Among first-order methods,
variance-reduction techniques such as SAG \cite{roux2012stochastic},
SDCA \cite{shalev2013stochastic}, SVRG \cite{johnson2013accelerating}, SAGA
\cite{defazio2014saga}, Katyusha \cite{allen2017katyusha} and others \cite{frostig2015competing, konecny2015mini,allen2016improved}, have proven particularly
effective. One of the most popular variants of this approach is
Stochastic Variance-Reduced Gradient (SVRG),
which achieves variance reduction by combining frequent stochastic gradient queries with
occasional full batch gradient queries, to optimize the overall cost
of finding an $\epsilon$-approximate solution, where the cost is
measured by the total number of queries to the components $\nabla\psi_i(\x)$.

Many stochastic second-order methods have also been proposed for solving finite-sum
minimization, including Subsampled Newton
\cite{erdogdu2015convergence,roosta2019sub,bollapragada2018exact,berahas2020investigation,determinantal-averaging}
Newton Sketch
\cite{pilanci2016iterative,pilanci2017newton,newton-less},
and others
\cite{kovalev2019stochastic,moritz2016linearly,tripuraneni2018stochastic,mokhtari2018iqn,gupta2021localnewton}. 
 These approaches are generally less
sensitive to hyperparameters such as the step size, and they typically
query larger random batches of component gradients/Hessians at a time,
as compared to first-order methods. The larger queries make
these methods less sequential, allowing for more effective
vectorization and~parallelization.

A number of works 
have explored whether second-order information can be used to
accelerate stochastic variance-reduced methods, resulting in several
algorithms such as Preconditioned SVRG \cite{gonen2016solving}, SVRG2
\cite{gower2018tracking} and others \cite{gower2016stochastic,liu2019acceleration}. However, these are still primarily stochastic first-order
methods, highly sequential and with a problem-dependent step
size. Comparatively little work has been done on using variance
reduction to accelerate stochastic Newton-type methods for convex
finite-sum minimization (see discussion in Section~\ref{s:related-work}).
To that end, we ask:

\begin{center}
\emph{Can variance reduction accelerate local convergence
 of Stochastic Newton\\ in convex finite-sum minimization?}
\end{center}

We show that the answer to this question is positive. The method that
we use to demonstrate this, which we call Stochastic Variance-Reduced Newton
(SVRN), retains the positive characteristics of
second-order methods, including easily parallelizable large-batch
gradient queries, as well as minimal
hyperparameter tuning (e.g., accepting a unit step size). We prove that, when the number of components
$\psi_i$ is sufficiently large, SVRN achieves a better parallel 
complexity than SVRG, and a better sequential complexity than the corresponding
Stochastic Newton method (see Table \ref{tab:svrn}).

\section{Main result}\label{s:main}
In this section, we present our main result, which is the parallel
and sequential 
complexity analysis of the local convergence of SVRN. The algorithm itself is
discussed in detail in Section \ref{s:proof}.

We now present the assumptions needed for our main result,
starting with $\mu$-strong convexity of $f$ and $\lambda$-smoothness
of each $\psi_i$.  These are standard for establishing linear convergence rate of SVRG.
  Our result also requires Hessian regularity assumptions
  (Definition~\ref{d:neighborhood}), which are standard for 
Newton's method and only affect the size of the local convergence 
neighborhood.
\begin{assumption}\label{a:convex}
We assume that $f(x)=\frac1n\sum_{i=1}^n\psi_i(x)$ has continuous first and second derivatives, as well as a bounded condition number $\kappa=\lambda/\mu$, where $\mu$ and $\lambda$ are
  defined as follows:
\vspace{-3mm}
  \begin{enumerate}
  \item Function $f$ is $\mu$-strongly convex, i.e.,\vspace{-2mm}
    \begin{align*}
      f(\x)\geq f(\x')+\nabla
      f(\x')^\top(\x-\x') + \frac\mu2\|\x-\x'\|^2;
    \end{align*}
    \vspace{-8mm}
    \item Each of the $n$ components $\psi_i$ is $\lambda$-smooth,
      i.e.,\vspace{-2mm}
      \begin{align*}
        \psi_i(\x)\leq \psi_i(\x') +
        \nabla\psi_i(\x')^\top(\x-\x')+\frac \lambda2\|\x-\x'\|^2.
      \end{align*}
   \end{enumerate}
 \end{assumption}

 To highlight the parallelizability of SVRN due to large mini-batches, as well as the effect of
 variance reduction on its performance, we will consider two standard
 complexity measures:
 \begin{enumerate}
 \item \textbf{Parallel complexity:} Number of batch gradient queries,
   i.e., times the algorithm
   computes a gradient at an iterate, over the full batch or a
   mini-batch. This corresponds to the PRAM model.
 \item \textbf{Sequential complexity:} Number of queries to component
   gradients $\nabla\psi_i(\x)$, normalized by the total number of
   components $n$. Thus, one can think of this as the number of ``data
   passes'', where one ``data pass'' may possibly query component gradients at
   different iterates. This is a natural measure of complexity for
   stochastic first-order methods, which coincides with the parallel
   complexity when we only query full-batch gradients.
 \end{enumerate}

The motivation for this dual-complexity perspective is centered on the benefits of using large-batch gradient queries.
These benefits come from the fact that on most computing architectures it takes far
 less time to compute a single gradient estimate on a batch of
 component functions at one location, than it takes to compute
 component gradients at  different locations, one at a time. We
 highlight this by the notion of parallel complexity, which measures the
 number of batch gradient queries. Note that the benefits of large
 mini-batch sizes are not limited to parallel computing. Even standard
single- and  multi-core architectures benefit substantially from the
 vectorization of gradient computations, which is only effective when
 using large batches (as we observe in our experiments, see Section
 \ref{s:experiments}).

 Despite these benefits,  parallelization and
 vectorization still come with some computation/communication
 overhead, which is why it is natural to also optimize over the sequential
 complexity of the algorithms. Altogether, to express this in our
model, we optimize both over parallel and sequential complexity, while
prioritizing the parallel one. Note that for any algorithm using
only full gradients (such as the standard versions of gradient descent
or Subsampled Newton), the two notions of complexity are exactly
equivalent. For example, in gradient descent (GD), both parallel and
sequential complexity is $O(\kappa\log(1/\epsilon))$. By introducing stochastic gradients and variance reduction,
as in SVRG, we can improve upon the sequential complexity of GD, while
preserving (but not improving)
its parallel complexity. Specifically, when
$n\gg \kappa$, then SVRG with optimal mini-batch size takes
$O(\log(1/\epsilon))$ sequential time to find
an $\epsilon$-approximate solution, however it still needs
$O(\kappa\log(1/\epsilon))$ parallel time (Table~\ref{tab:svrn}).

We can avoid the dependence of parallel complexity on the condition number
$\kappa$ by using second-order information. In particular, suppose
that in each iteration we compute the full gradient $\nabla f(\x)$ and
are given access to a (typically stochastic) Hessian estimate $\Hbt$ such that for some
$1\leq \alpha\ll\kappa$,
\begin{align}
\text{(Hessian $\alpha$-approximation)}\qquad  \frac1{\sqrt\alpha}\, \nabla^2f(\x)\preceq
\Hbt\preceq\sqrt\alpha\,\nabla^2f(\x),\label{eq:alpha}
\end{align}
In fact, our condition can be stated in an even more general way, by asking that $\beta_1\nabla^2 f(x)\preceq \Hbt
\preceq \beta_2\nabla^2 f(x)$ for some fixed $0<\beta_1\leq
\beta_2$ such that $\alpha = \beta_2/\beta_1$. In this case, $(1/\sqrt{\beta_1\beta_2})\Hbt$ is an
$\alpha$-approximation in the sense of \eqref{eq:alpha}, which means that we can apply our results and
algorithms given any such Hessian estimates.

A standard Stochastic Newton (SN) update, given below in
\eqref{eq:stochastic-newton}, can achieve parallel and sequential 
complexity of $O(\alpha\log(1/\epsilon))$ locally in the neighborhood
of the optimum. It is thus natural to ask
whether we can use stochastic gradients and variance reduction to
accelerate the local sequential complexity of this method, while
preserving its parallel complexity. Our main result shows not only that this is
possible, but also, remarkably, that this acceleration gets more significant the larger
the data size $n$. See also Theorem \ref{t:svrn} for algorithmic
details and convergence analysis.

\begin{theorem}[informal Theorem \ref{t:svrn}]\label{t:informal}
Suppose that Assumption \ref{a:convex} holds and: (a)
$f$ has a Lipschitz Hessian, or (b) $f$ is
self-concordant. Moreover, let $n\gg\kappa\gg\alpha$. There is an algorithm (SVRN)
and an open neighborhood $U$ such
that, given any $\x\in U$ with a corresponding Hessian $\alpha$-approximation as
in \eqref{eq:alpha}, the cost of returning
$\xbt$ such that $f(\xbt)-f(\x^*)\leq \epsilon\cdot
(f(\x)-f(\x^*))$ is as follows:
\begin{align*}
\textnormal{Parallel time}\  =\  O\big(\alpha\log(1/\epsilon)\big)\text{
  batch queries}\qquad
  \text{and}\qquad
  \textnormal{Sequential time}
\ =\   O\Big(\frac{\log(1/\epsilon)}{\log(n)}\Big)\text{ data passes.}
\end{align*}
\end{theorem}
\begin{table*}[t]
\centering  \begin{tabular}{r||c|c|c|c}
  & Second-order 
  & Parallel (batch queries)
  & Sequential (data passes)
  \\
  \hline\hline
Gradient Descent
  & \xmark 
  &$O(\Red{\kappa}\log(1/\epsilon))$
  &$O(\Red{\kappa}\log(1/\epsilon))$
  \\[1mm]
Accelerated GD
  & \xmark 
  &$O(\Red{\sqrt \kappa}\log(1/\epsilon))$
  &$O(\Red{\sqrt \kappa}\log(1/\epsilon))$
  \\[1mm]
Stochastic Newton 
  & \cmark 
  & $O(\Red{\alpha}\log(1/\epsilon))$
  & $O(\Red{\alpha}\log(1/\epsilon))$
  \\[1mm]
 Mini-batch SVRG 
  & \xmark 
  &$O(\Red{\kappa}\log(1/\epsilon))$
  &$O(\log(1/\epsilon))$
  \\[1mm]
Mini-batch Katyusha 
  &  \xmark 
  &$O(\Red{\sqrt\kappa}\log(1/\epsilon))$
  &$O(\log(1/\epsilon))$
  \\[1mm]
  \hline
  \textbf{SVRN (this work)}
  & \cmark 
  & $O(\Red{\alpha}\log(1/\epsilon))$
  & $O\big(\frac{\log(1/\epsilon)}{\Green{\log(n)}}\big)\white{\Big|}$
\end{tabular}
\caption{Comparison of local convergence behavior for SVRN and
  related stochastic methods in the big data regime, i.e., $n\gg
  \kappa$, along with full-batch Gradient Descent (GD), and
  Accelerated GD. 
Time complexities are obtained by first optimizing parallel
time (batch queries), and then optimizing sequential time (data passes). For the second-order
methods, we assume a Hessian $\alpha$-approximation \eqref{eq:alpha}
where $\alpha\ll\kappa$.
} 
\label{tab:svrn}
\end{table*}
\begin{remark}
 SVRN improves on the
 sequential complexity of Stochastic Newton by $O(\alpha\log(n))$, while retaining the
 same parallel complexity. Moreover, if $\alpha\leq 2$,
 then the algorithm accepts a unit step size, and still
 achieves $O(\log(n))$ acceleration.
 Note that this acceleration improves with the problem size $n$, which
 is a unique property of SVRN.  
\end{remark}
\begin{remark}
  To find an initialization point for SVRN, one can simply run
  a few iterations of a Subsampled Newton method with line
search. In Section \ref{s:svrn-ha}, we propose a globally
convergent algorithm based on this approach (SVRN-HA; see
Algorithm~\ref{alg:svrn}), and in Section \ref{s:experiments} we show
empirically that it substantially accelerates Subsampled Newton.
\end{remark}

\subsection{Discussion}
\label{s:discussion}
In this section, we compare the local convergence complexity of SVRN
to standard stochastic first-order 
and second-order algorithms. In this comparison, we focus on what we
call the big data regime, i.e., $n\gg \kappa$, which is of primary
interest in the literature on Subsampled Newton methods.
Then, in Section~\ref{s:ls},
we illustrate how SVRN can be further improved via sketching and importance
sampling, when solving problems with additional structure, such as least
squares. 

\paragraph{Comparison to SVRG and Katyusha.}
As we can see in Table \ref{tab:svrn}, first-order algorithms,
including variance-reduced methods such as SVRG \cite{johnson2013accelerating}, and its accelerated
variants like Katyusha \cite{allen2017katyusha}, suffer
from a dependence on the condition number $\kappa$ in their parallel
complexity. Namely, they require either $O(\kappa\log(1/\epsilon))$ or $O(\sqrt
\kappa\log(1/\epsilon))$ batch gradient queries, compared to 
$O(\alpha\log(1/\epsilon))$ for SVRN and Stochastic Newton, where
$\alpha$ is the
Hessian approximation factor, which is often much smaller than $\sqrt\kappa$. This is because,
unlike SVRN, these methods do not scale well to large mini-batches,
making them less parallelizable. 

Another difference between SVRN and SVRG or Katyusha is that, when
the Hessian approximation is sufficiently accurate ($\alpha\leq 2$), then
SVRN accepts a unit
step size, which leads to optimal
convergence rate without any tuning. On the other hand, the
optimal step size for SVRG depends on the strong convexity and
smoothness constants $\mu$ and $\lambda$, and thus, requires tuning.

\paragraph{Comparison to Stochastic Newton.}
We next compare SVRN with Stochastic Newton
methods such as Subsampled Newton and Newton Sketch. Here, the most standard proto-algorithm 
considered in the literature is the following update:
 \begin{align}
   \xbt_{s+1} = \xbt_s - \eta_s\Hbt^{-1}\,\nabla f(\xbt_s).\label{eq:stochastic-newton}
 \end{align}
As mentioned earlier, this update uses only full gradients, so both
its parallel and sequential complexity is $O(\alpha\log(1/\epsilon))$ (see Stochastic Newton in
Table \ref{tab:svrn}). On the other hand, SVRN provides a direct
acceleration of the sequential complexity without sacrificing any
parallel complexity.

The Hessian $\alpha$-approximation $\Hbt\approx \nabla^2 f(\xbt_s)$ can be
produced in a number of ways, but perhaps the most relevant for this discussion is Hessian
subsampling, a.k.a. Subsampled Newton \cite{roosta2019sub}. 
In this setting, given $k$ component Hessians sampled uniformly at
random, we can with high probability construct an estimate $\Hbt$ with
approximation factor $\alpha = 1+O\big(\kappa\log(d)/k +
\sqrt{\kappa\log(d)/k}\big)$ (see
Appendix~\ref{a:matrix-bernstein}), so that $\alpha\ll\kappa$ for any
$k\gg \log(d)$, and $\alpha\leq 2$ for $k = O(\kappa\log (d))$.
However, if we wanted to recover
SVRN's sequential complexity of $O\big(\frac{\log(1/\epsilon)}{\log(n)}\big)$
purely by improving the Hessian approximation in Subsampled Newton, the required
Hessian sample size $k$ would become at least as large as $n$, meaning
that we would essentially have to use the exact Hessian (i.e.,
Newton's method), which is
highly undesirable. 

We note that some of the literature on
Subsampled Newton proposes to subsample both the Hessian and the
gradient \cite{bollapragada2018exact}, which would be akin to $\xbt_{s+1} = \xbt_s -
\eta_s\Hbt^{-1}\frac1m\sum_{i=1}^m\nabla \psi_{I_i}(\xbt_s)$. However, as is
noted in the literature, to maintain linear convergence of such a
method, one has to keep increasing the gradient sample size at an exponential
rate, which means that, for finite-sum minimization, we quickly revert
back to the full~gradient (see experiments in Section \ref{s:experiments-ls}).  

\subsection{Accelerating SVRN with sketching and importance sampling}
\label{s:ls}
When the minimization task possesses additional structure, then we can
combine SVRN with Hessian and gradient estimation techniques other than
uniform subsampling. For example, one such family of techniques, called randomized
sketching \cite{DM16_CACM,woodruff2014sketching,randlapack_book_v1,derezinski2024recent}, is applicable when the Hessian can be represented by a
decomposition $\nabla^2f(\x)=\A_f(\x)^\top\A_f(\x)+\C$, where $\A_f(\x)$
is a tall $n\times d$ matrix and $\C$ is a fixed $d\times d$
matrix. This setting applies for many empirical risk minimization
tasks, including linear and logistic regression, among
others.

Sketching can be used to construct an estimate of 
the Hessian by applying a randomized linear transformation to
$\A_f(\x)$, represented by a $k\times n$ random matrix $\S$, where
$k = \tilde O(d)$ is much smaller than $n$. Using standard
sketching techniques, such as Subsampled Randomized Hadamard
Transforms \cite[SRHT]{ailon2009fast}, Sparse Johnson-Lindenstrauss
Transforms \cite[SJLT]{cw-sparse,nn-sparse,chenakkod2023optimal} and Leverage Score
Sparsified embeddings \cite[LESS]{less-embeddings}, we
can construct a Hessian estimate  
that satisfies the requirements of Theorem~\ref{t:informal} at the cost of
$\tilde O(nd + d^3)$, which corresponds to
a nearly-constant number of data passes and $d\times d$ matrix multiplies. In particular, this eliminates the dependence of
Hessian estimation on the condition number.

Another way of
making SVRN more efficient is to use importance sampling in the
stochastic gradient mini-batches. Importance sampling can be introduced to any finite-sum
minimization task \eqref{eq:finite-sum} by specifying an $n$-dimensional probability vector
$p=(p_1,...,p_n)$, such that $\sum_ip_i=1$, and sampling the component
gradient $\psi_{I_i}(\x)$ so that the index
$I_i$ is drawn according to $p$.
With the right choice of
importance sampling, we can substantially reduce the smoothness
parameter $\lambda$, and thereby, the condition number $\kappa$ of
the problem (see Appendix \ref{a:ls}).

The above techniques can be used to
accelerate SVRN, for instance, in the important task of solving least
squares regression. Here, given an $n\times d$ matrix $\A$ with rows $\a_i^\top$ and an
  $n$-dimensional vector $\y$, the objective being minimized is the
  following quadratic function:
\begin{align}
  f(\x) = \frac1{2n}\|\A\x-\y\|^2 = \frac1{n}\sum_{i=1}^n\frac12(\a_i^\top\x-y_i)^2.\label{eq:ls-first}
\end{align}

  One of the popular
  methods for solving the least squares task, known as the Iterative
  Hessian Sketch \cite[IHS]{pilanci2016iterative}, is exactly the Stochastic Newton update
  \eqref{eq:stochastic-newton}, where the Hessian estimate
  $\Hbt$ is constructed via sketching. In this context, SVRN can be
  viewed as an accelerated version of IHS. To
  fully leverage the structure of the least squares problem,
  we use a popular
  importance sampling technique called leverage score sampling \cite{drineas2006sampling,fast-leverage-scores}, where
  the importance probabilities are (approximately) proportional to
  $p_i\propto \a_i^\top(\A^\top\A)^{-1}\a_i$. Through an adaptation of
  our main result, we show that a version of SVRN for least squares,
  with sketched Hessian and leverage score 
  sampled gradients, improves on the state-of-the-art complexity for reaching
  a high-precision solution to a preconditioned least squares task   from
  $O(nd\log(1/\epsilon))$ \cite{rokhlin2008fast,avron2010blendenpik,meng2014lsrn}
  to $O\big(nd\,\frac{\log(1/\epsilon)}{\log(n/d)}\big)$. See
  Appendix~\ref{a:ls} for proof and further discussion. 
  \begin{theorem}[Fast least squares solver]
    \label{c:least-squares}
   Given $\A\in\R^{n\times d}$ and $\y\in\R^n$,
   after $O(nd\log n + d^3\log d)$ preprocessing cost to find the sketched
   Hessian estimate $\Hbt$ and an approximate
   leverage score distribution, SVRN finds $\xbt$ so
   that
   \begin{align*}
     f(\xbt)\leq(1+\epsilon)
   f(\x^*)\quad\text{ in }\quad
     O\Big(nd\,\tfrac{\log(1/\epsilon)}{\log(n/d)}\Big)\quad\text{ time.}
     \end{align*}
 \end{theorem}
 Crucially, the
SVRN-based least squares solver only requires a 
 preconditioner $\Hbt$ that is a 
constant factor approximation of the Hessian, i.e., $\alpha=O(1)$. Interestingly,
our approach of transforming the problem via leverage score sampling
appears to be connected to a weighted and preconditioned SGD algorithm of \cite{yang2017weighted} for
solving a more general class of $\ell_p$-regression
problems. We expect that Theorem \ref{c:least-squares}
can be similarly extended beyond least squares regression.

\subsection{Further related work}
\label{s:related-work}

As mentioned in Section \ref{s:intro}, a number of works have aimed to
accelerate first-order variance reduction methods by preconditioning
them with second-order information. For example, \cite{gonen2016solving} proposed
Preconditioned SVRG for ridge regression. The effect of this preconditioning, as in
related works \cite{liu2019acceleration}, is a reduced condition number
$\kappa$ of the problem. This is different from Theorem~\ref{t:informal},
which uses preconditioning to make variance reduction effective for
large mini-batches and with a unit step~size.

Some works have shown that, instead of preconditioning, one can use
momentum to accelerate variance reduction, and also to improve its
convergence rate when using mini-batches. These methods include
Catalyst \cite{lin2015universal} and Katyusha
\cite{allen2017katyusha}. However, unlike SVRN, these approaches are
still limited to fairly small mini-batches, as demonstrated in Table \ref{tab:svrn}.

A number of works have proposed applying techniques inspired by
variance reduction to stochastic Newton-type methods in settings
which are largely incomparable to ours. First,
\cite{rodomanov2016superlinearly,kovalev2019stochastic} consider
algorithms where the Hessian and gradient information is incrementally updated 
with either individual samples or mini-batches. However, the
approximate Hessian information required by these methods is quite
different than the one used in SVRN: they
require the Hessian estimate to be initialized with \emph{all} $n$
component Hessians, possibly computed at different locations (compared
to, e.g., a subsampled estimate). For
example, in the case of least squares, this means computing
the exact Hessian, which costs $O(nd^2)$ time and renders the task
trivial (compare this to our Theorem~\ref{c:least-squares}, where the Hessian estimate
required by SVRN can be approximated efficiently). Setting this aside,
we can still compare the local convergence rate of SVRN with the Stochastic
Newton method of \cite[Theorem 1]{kovalev2019stochastic} using the same
mini-batch size. Assuming $n\gg \kappa$ and using the setup from
Theorem~\ref{t:informal},
their method achieves
$O(\log(1/\epsilon))$
data passes, whereas SVRN
obtains the accelerated complexity of $O\big(\frac{\log(1/\epsilon)}{\log(n)}\big)$
data passes.

In the non-convex setting, variance reduction
was used by \cite{zhou2019stochastic,zhang2022adaptive} to accelerate
Subsampled Newton with cubic 
regularization. They use variance reduction both for
the gradient and the Hessian estimates. Also, \cite{wang2017stochastic} incorporate variance reduction into a stochastic quasi-Newton method. However, due to the non-convex setting, these results are
incomparable to ours, as we are focusing on strongly convex~optimization.

\section{Local convergence analysis of SVRN}
\label{s:proof}
In this section, we present the convergence analysis for SVRN, leading
to the proof of Theorem \ref{t:informal}.
\paragraph{Notation.}
For $d\times d$ positive semidefinite matrices $\A$ and $\B$, we define
$\|\v\|_{\A}=\sqrt{\v^\top\A\v}$, and we say that $\A\approx_\epsilon\B$, when
$(1-\epsilon)\B\preceq\A\preceq(1+\epsilon)\B$, where $\preceq$
denotes the positive semidefinite ordering (we define analogous
notation $a\approx_\epsilon b$ for non-negative scalars
$a,b$). We use $c$ and $C$ to denote positive absolute constants, and let $I\sim[n]$ denote a uniformly random sample from $\{1,..,n\}$.

We will present the analysis
in a slightly more general setting of expected risk minimization,
i.e., where $f(\x) = \E_{\psi\sim \Dc}[\psi(\x)]$.
Here, $\Dc$ is a distribution over convex functions
$\psi:\R^d\rightarrow \R$. Clearly, this setting subsumes
\eqref{eq:finite-sum}, since we can let $\Dc$ be a uniformly random
sample $\psi_i$. Thanks to this extension, our results can apply
to \textit{importance} sampling of component
functions, as in Theorem~\ref{c:least-squares}.
\begin{definition}\label{d:neighborhood}
We define the local convergence neighborhood $U_f(\epsilon)$, 
parameterized by $\epsilon\in(0,1)$, as:
\vspace{-3mm}
\begin{enumerate}
\item If $f$ has an $L$-Lipschitz Hessian,
    then $U_f(\epsilon)= \{\x :
    \|\x-\x^*\|_{\nabla^2f(\x^*)}<\epsilon\, \mu^{3/2}/L\}$;
  \item If $f$ is self-concordant,
    then we use
    $U_f(\epsilon)=\{\x:\|\x-\x^*\|_{\nabla^2f(\x^*)}<\epsilon/4\}$.
  \end{enumerate}
\end{definition}
Our local convergence analysis is captured by the following
theorem, which provides the rate of convergence after one outer
iteration of SVRN (stated below), for a range of
mini-batch sizes $m$.
\begin{theorem}[Convergence rate of SVRN]\label{t:svrn}
Suppose that Assumption \ref{a:convex} holds, $\alpha\geq 1$, and either: (a)
$f$ has a Lipschitz continuous Hessian, or (b) $f$ is
self-concordant.
There is an absolute constant $c>0$ such that
if $\xbt_s\in U_f(1/c\alpha)$, and we
are given the gradient $\gbt_s=\nabla f(\xbt_s)$
as well as a Hessian $\ch$-approximation, i.e., $\Hbt$ such that
$\frac1{\sqrt \ch}\nabla^2f(\xbt_s)\preceq\Hbt\preceq \sqrt\ch\,\nabla^2f(\xbt_s)$,
then, letting $\x_0=\xbt_s$ and: 
\begin{align*}
  \x_{t+1} = \x_t - \eta\Hbt^{-1}\bigg(\frac1m\sum_{i=1}^m\nabla\psi_i(\x_t) -
  \nabla\psi_i(\xbt_s) + \gbt_s\bigg),\qquad \psi_1,...,\psi_m\sim\Dc,
\end{align*}
after $t$ iterations with mini-batch size $m\geq
c\ch^2\kappa \log(t/\delta)$ and step size $\eta=\min\{\sqrt{2/\alpha},1\}$, 
the iterate $\xbt_{s+1}=\x_t$ (i.e., one outer iteration of SVRN) with probability
$1-\delta$ satisfies:
\begin{align*}
  \frac{f(\xbt_{s+1})-f(\x^*)}{f(\xbt_s)-f(\x^*)} \leq 
\Big(1-\frac1{2\alpha}\Big)^{t} + c \ch^2\log(t/\delta)\frac{\kappa } m.
\end{align*}
\end{theorem}
\begin{remark}
The dependence on the condition number $\kappa$ in our result comes
from sub-sampled gradient estimation. This is consistent with
Subsampled Newton works such as \cite{roosta2019sub}: to recover their fast condition number-free convergence
guarantees they require the gradient sample size to be sufficiently
larger than $\kappa$.  We showed how this can be avoided for certain
losses (least squares; see Theorem~\ref{c:least-squares} and Lemma \ref{l:gradient-ls}) by relying on
importance sampling.
\end{remark}
\vspace{-1mm}
The proof of Theorem~\ref{t:svrn}, which is given in
Appendix~\ref{a:main-proofs}, relies on a new high-probability
bound for the error of the variance-reduced gradient estimates in the 
large mini-batch regime, measured using the vector norm defined by
the inverse Hessian at the optimum (Lemma~\ref{l:gradient}). Unlike results from
prior work, which hold in expectation, this bound crucially relies on
the iterate being in the local neighborhood. Also, unlike standard
SVRG analysis, we achieve our convergence guarantee for the \emph{last
  iterate} of SVRN's inner loop (as opposed a random or averaged
iterate), which is again enabled by exploiting
local second-order information. 
\begin{figure}
\centering\includegraphics[width = 0.55\textwidth]{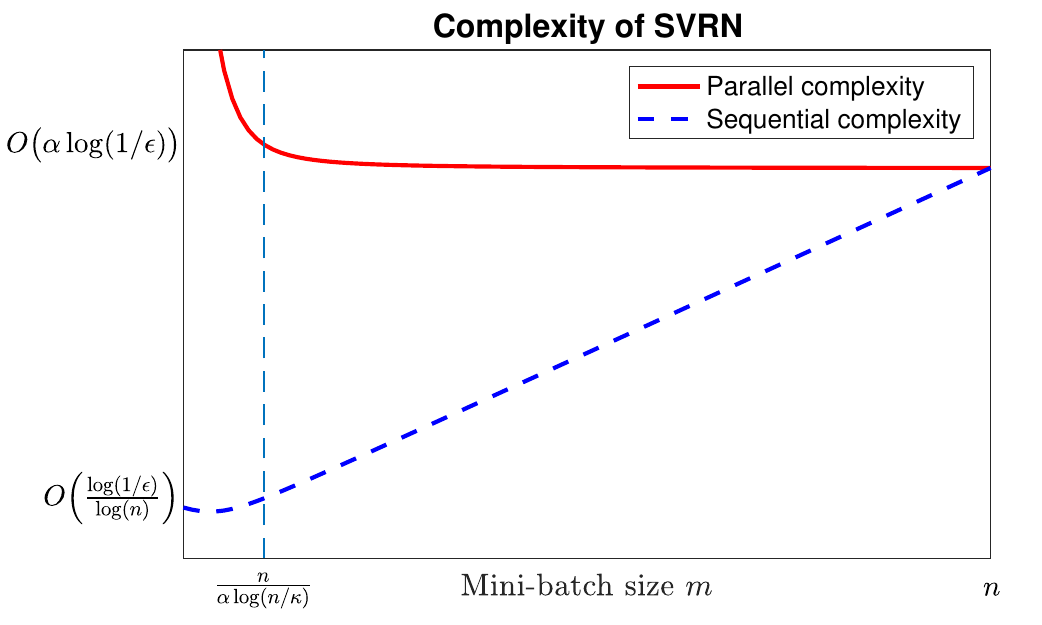} 
 \vspace{-3mm}
  \caption{Illustration of the local convergence complexity analysis
    for SVRN, as a function of the mini-batch size $m$, with the
    number of inner iterations set to $t_{\max}=n/m$. As we decrease
    the mini-batch size from $n$ (standard Stochastic Newton; SN) downto
    $m\approx\frac{n}{\alpha\log(n/\kappa)}$ (optimal SVRN), the sequential
    complexity (number of passes over the data) improves by
    $O(\alpha\log(n))$, while the parallel complexity (number of batch
    gradient queries) remains~optimal.}
  \label{f:svrn_plot}
\end{figure}
\vspace{-1mm}
\paragraph{Discussion.}
For simplicity, let us fix the number of
inner iterations $t_{\max}=n/m$, so that a single outer iteration of SVRN
always takes two passes over the data. Then, we can define the
linear convergence rate after one outer iteration as a function of mini-batch size $m$:
\begin{align*}
\rho_m:=  \Big(1-\frac{1}{2\alpha}\Big)^{n/m} + \tilde O(\kappa/m).
\end{align*}
Let us assume the big data regime, i.e., $n\gg\kappa$. If we
only use full-batch gradients ($m=n$), then the first term in the rate
dominates, and we have $\rho_m \approx 1-\frac1{2\alpha}$, which is
similar to what we would get using standard Stochastic
Newton~\eqref{eq:stochastic-newton}. As we decrease $m$ (and change 
$t_{\max}$ accordingly), the first term in $\rho_m$
decreases, whereas the second term increases. As a result, the overall rate rapidly
improves, reaching its optimal value of $\rho_m=\tilde O(\kappa/n)$ for $m\approx \frac{n}{\alpha\log(n/\kappa)}$.

\paragraph{Complexity analysis.} The complexity analysis given in
Theorem \ref{t:informal} follows directly from the above discussion,
since the sequential complexity (number of data passes needed to improve by factor $\epsilon$) is
given by $O\big(\frac{\log(1/\epsilon)}{\log(1/\rho_m)}\big)$, whereas
the parallel complexity (number of batch gradient queries) is $O\big(t_{\max}\cdot
\frac{\log(1/\epsilon)}{\log(1/\rho_m)}\big)$. In Figure
\ref{f:svrn_plot}, we illustrate how these quantities change as a
function of $m$. In particular, we observe that the batch gradients
essentially stay flat at $O(\alpha\log(1/\epsilon))$ as we decrease $m$,
until reaching $\frac{n}{\alpha\log(n/\kappa)}$. On the other hand, the
data pass complexity decreases linearly with $m$, until it reaches the
optimal value of
$O\big(\frac{\log(1/\epsilon)}{\log(n/\kappa)}\big)$, which,
for sufficiently large $n$, recovers Theorem~\ref{t:informal}.

\section{Globally convergent algorithm}
\label{s:svrn-ha}
We next present a practical stochastic second-order method (see
Algorithm \ref{alg:svrn}, called SVRN-HA) which 
uses SVRN to accelerate its local convergence phase.

The key in
implementing SVRN is that the algorithm is guaranteed to converge with unit step size only once
we reach a local neighborhood of the optimum, and if we have a
sufficiently accurate Hessian estimate. For this reason, we introduce
an initial phase of the algorithm, in which a standard Stochastic Newton method
is ran, using the Armijo line search to select the step size. Once the method
reaches the local convergence neighborhood, as long as the Hessian
estimates are accurate enough, the line search is guaranteed to
return a unit step size. At this point, the algorithm switches to SVRN
and achieves acceleration. 
Finally, to ensure that we reach a sufficiently accurate Hessian estimate, our
Stochastic Newton method should gradually increase the accuracy of the Hessian estimates.

\begin{algorithm}[!t]
\caption{SVRN with Hessian Averaging (SVRN-HA)}
\label{alg:svrn}
\textbf{Input}: iterate $\xbt_0$, gradient batch size $m$, Hessian sample size $k$, and local iterations $t_{\max}$\;
Initialize step size $\eta_{-1} = 0$ and Hessian estimate $\Hbt_{-1} = \zero$\;
\For{ $s = 0,1,2,\ldots$}{
Compute the subsampled Hessian: $\Hbh_s =
\frac1k\sum_{i=1}^k\nabla^2\psi_i(\xbt_s)$,\quad for \quad$\psi_1,...,\psi_k\sim\Dc$\;
Compute the Hessian average: $\Hbt_s = \frac
s{s+1}\Hbt_{s-1}+\frac1{s+1}\Hbh_s$\;
Compute the full gradient: $\gbt_s = \nabla f(\x_s)$\;
\uIf{$\eta_{s-1}< 1$}{
  Compute the descent direction $\tilde\v_s$ by solving: $\Hbt_s\tilde\v_s = -\gbt_s$\;
}\Else{
  Initialize $\x_0 = \xbt_s$\;
  \For{ $t=0,\ldots,t_{\max}-1$}{
    Compute $\gbh_t(\x_t)$ and $\gbh_t(\xbt_s)$,\quad
    for \quad$\gbh_t(\x)=\frac1m\sum_{i=1}^m\nabla\psi_i(\x)$,\quad$\psi_1,...,\psi_m\sim\Dc$\;
    Compute variance-reduced gradient $\bar\g_t=
    \gbh_t(\x_t)-\gbh_t(\xbt_s) + \gbt_s$\;
    Compute the descent direction $\v_t$ by solving: $\Hbt_s\v_t =-\bar\g_t$\;
    Update $\x_{t+1} = \x_t + \v_t$
  }
  Compute the descent direction: $\tilde\v_s=\x_{t_{\max}}-\xbt_s$\;
}
  Compute $\eta_s$ for iterate $\xbt_s$ and direction
  $\tilde\v_s$ using the Armijo condition\;
  Update $\xbt_{s+1} = \xbt_s + \eta_s\tilde\v_s$\;
}\vspace{-1mm}
\end{algorithm}

Based on these insights, we
propose an algorithm called Stochastic Variance-Reduced Newton with Hessian Averaging
(SVRN-HA). In the initial phase, this algorithm is a variant of
Subsampled Newton, based on a method proposed by
\cite{hessian-averaging}, where, at each iteration, we construct a
subsampled Hessian estimate based on a fixed sample size $k$. To
increase the accuracy over time, all past Hessian estimates are
averaged together, and the result is used to precondition the full
gradient. At each iteration, we check whether the last line
search returned a unit step size. If yes, then we start running SVRN
with local iterations $t_{\max}=\lfloor\log_2(n/d)\rfloor$ and gradient batch size
$m=\lfloor n/\log_2(n/d)\rfloor$, where $n$ is the number of data points and $d$ is the
dimension. This is motivated by our theory (see
discussion below Theorem \ref{t:svrn}), using $d$ as a proxy for the condition
number $\kappa$. This choice has proven effective in all of the
evaluated datasets, which indicates that the theoretical dependence of
the mini-batch size on the condition number is likely quite pessimistic.

In the following result, we establish global convergence of SVRN-HA,
by showing that the global phase of this method will not only reach any local
neighborhood, but also that the Hessian estimate will get
progressively more accurate, eventually reaching the desired
approximation accuracy.
\vspace{-1mm}
\begin{theorem}\label{t:global}
Let $f$ be as in Theorem \ref{t:svrn}. For any neighborhood $U$ around
the optimum, Algorithm \ref{alg:svrn}
will almost surely reach a point where: (a) $\xbt_s$ belongs to the
neighborhood $U$, 
  and (b) the Hessian estimate $\Hbt_s$ satisfies the condition in Theorem \ref{t:svrn}.
At this point, the line search will return $\eta_s = 1$.
\end{theorem}

\section{Experiments}
\label{s:experiments}

We next demonstrate numerically that SVRN can be effectively used to
accelerate stochastic Newton methods in practice. We also show how
variance reduction can be incorporated into a globally convergent Subsampled
Newton method in a way that is robust to hyperparameters and
preserves its scalability thanks to large-batch operations.\footnote{The code is available at
  \url{https://github.com/svrnewton/svrn}.}

\subsection{Logistic regression experiment}
In this
section, we present numerical experiments for solving a  regularized logistic
loss minimization task.
For an $n\times
d$ data matrix $\A$ with rows $\a_i^\top$, an $n$-dimensional target
vector $\y$ (with $\pm1$ entries $y_i$) and a regularization parameter
$\gamma$, our task is to minimize:
\begin{align}
f(\x) =
  \frac1n\sum_{i=1}^n\log(1+\ee^{-y_i\a_i^\top\x}) + \frac\gamma2\|\x\|^2.\label{eq:lr}
\end{align}
As a dataset, we used the Extended
MNIST dataset of handwritten digits \cite[EMNIST]{cohen2017emnist} with
$n=500$k datapoints,  transformed using a
 random features map (with dimension $d=1000$). Experimental
details, as well as further results on the CIFAR-10 dataset
and several
synthetic data matrices, are presented in Appendix
\ref{a:experiments}.

\ifisarxiv\begin{figure} [!th]
\centering   
\subfigure[Convergence ($d=1000$)]{\label{fig:1000-iters}\includegraphics[width=6.5cm]{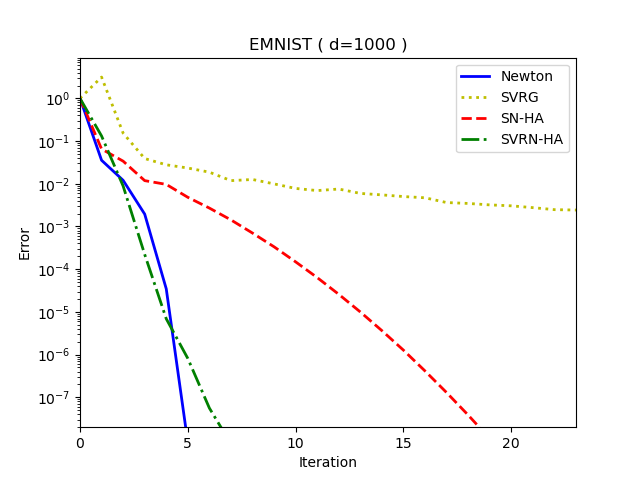}}
\subfigure[Runtime ($d=1000$)]{\label{fig:1000-time}\includegraphics[width=6.5cm]{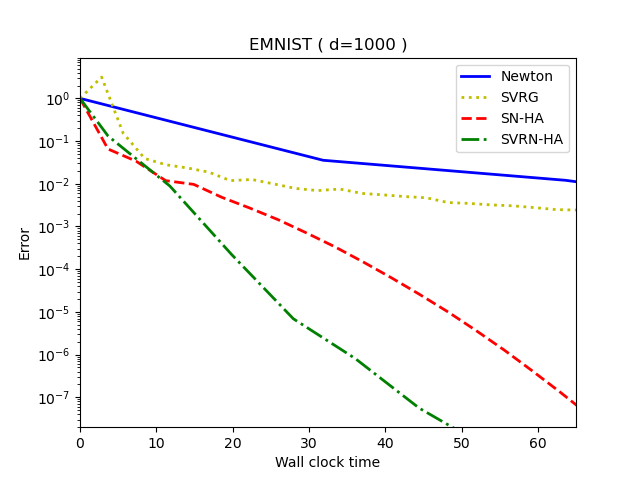}}
\vspace{-2mm}\caption{Convergence and runtime comparison of SVRN-HA against three
  baselines: classical Newton, SVRG (after parameter tuning), and
  Subsample Newton with Hessian Averaging (SN-HA), i.e., the initial
  phase of SVRN-HA ran without variance reduction all the way
  through.}\label{fig:plots}
\vspace{-1mm}
\end{figure}
\else
\begin{figure*} 
  \vspace{-3mm}
  \centering     
\subfigure[Convergence]{\label{fig:1000-iters}\includegraphics[width=.4\textwidth]{figs/loss_vs_iters_cropped_1000.png}}
\subfigure[Runtime (seconds)]{\label{fig:1000-time}\includegraphics[width=.4\textwidth]{figs/loss_vs_time_cropped_1000.png}}
\vspace{-1mm}
\caption{Convergence and runtime comparison of SVRN-HA on the EMNIST
  dataset against three 
  baselines: classical Newton, SVRG (after parameter tuning), and
  Subsampled Newton with Hessian Averaging (SN-HA), i.e., the global
  phase of Algorithm \ref{alg:svrn}, ran without switching to SVRN. Further results on the CIFAR-10 dataset are in Appendix \ref{a:experiments}.}\label{fig:plots}
\vspace{-1mm}
\end{figure*}
\fi

In Figure \ref{fig:plots}, we compared SVRN-HA to three baselines which are most
directly comparable: (1) the classical Newton's method; (2) SVRG with
the step size and number of inner iterations tuned for best wall-clock time; and (3) Subsampled Newton
with Hessian Averaging (SN-HA), i.e., the method we use in the
global phase of Algorithm \ref{alg:svrn} (without the SVRN phase). All
of the convergence plots are averaged 
over 10 runs. For both SVRN-HA and SN-HA we use
Hessian sample size $k=4d$.

From Figure \ref{fig:1000-iters}, we conclude
that as soon as SVRN-HA exits the initial phase of the optimization,
it accelerates dramatically, to the point where it nearly matches the rate
of classical Newton. This acceleration corresponds to the improvement
in sequential complexity from $O(\alpha\log(1/\epsilon))$ for
Stochastic Newton to $O(\frac{\log(1/\epsilon)}{\log(n)})$ for
SVRN. In all our experiments, the transition to the  SVRN phase
in SVRN-HA occurred very quickly, generally within 1-2 iterations,
which indicates that the method easily reaches local convergence.
Finally, we observed that the convergence of SVRG is initially quite fast, but over time,
it stabilizes at a slower rate, indicating that the Hessian
information plays a significant role in the performance of SVRN-HA.

In Figure \ref{fig:1000-time}, we plot the
wall clock time of the algorithms. Here, SVRN-HA also performs better
than all of the baselines, despite some additional per-iteration overhead.
We expect that
this can be further optimized. Finally, we note that Newton's method is
drastically slower than all other methods due to the high cost of solving a
large linear system, and the per-iteration time of SVRG is substantially slowed by its
sequential nature.

\subsection{Further investigations on a least squares task}
\label{s:experiments-ls}

We next study the setting of least squares
regression \eqref{eq:ls-first} to analyze the trade-offs in convergence for different
implementations of SVRN, as we vary the gradient and Hessian
estimation schemes. We evaluated the algorithms on synthetic data
matrices, as defined in Appendix \ref{a:experiments}.

\paragraph{Communication cost of gradient resampling.}
Our theoretical analysis requires that for each small step of SVRN, a
fresh sample of components $\psi_i$ is used to compute the gradient
estimates. However, in Lemma~\ref{l:gradient} we showed that, after
variance reduction, the gradient estimates are accurate with high 
probability, which suggests that we might be able to reuse previously
sampled components. While this technically does not improve the number
of required gradient queries, it can substantially reduce the
communication cost for some practical implementations.

As an example,
let us consider the setting where each full/mini-batch gradient
computation requires reading the corresponding data chunk from the
server onto the computing core. Then, the theoretical version of SVRN
requires reading the entire dataset roughly $2$ times ($2n$ data
points) in the course of one stage (outer iteration): $n$ data points for computing the full gradient, and then $t_{\max}\cdot m = (n/m)\cdot m = n$ data points for all of the mini-batch steps together. On the other
hand, if we were to reuse the same mini-batch for all of the inner
iterations in one stage, then we require reading the dataset only
$1+o(1)$ times.

In
Figure~\ref{fig:ls-sampling}, we investigate how much the convergence rate
of SVRN-HA is affected by the frequency of component resampling for
the gradient estimates. Recall that in all our experiments, we use a
gradient sample size of $m=\lfloor n/\log_2(n/d)\rfloor$. We consider the following variants:
\vspace{-3mm}\begin{enumerate}
  \item \underline{Sampling once}: an extreme policy of sampling one set
    of components and reusing them for all gradient estimates;\vspace{-2mm}
  \item \underline{Sampling per stage}: an intermediate policy of
    resampling the components after every full gradient computation.\vspace{-2mm}
  \item \underline{Sampling per step}: the policy which is used in our
    theory, i.e., resampling the gradients at every step of the inner
    loop of the algorithm.
  \end{enumerate}
  From Figure \ref{fig:ls-sampling} we conclude that, while all three
  variants of SVRN-HA converge and are competitive with SN-HA, the
  extreme policy of sampling once leads to a substantial
  degradation in convergence rate, whereas sampling per stage and
  sampling per step perform very similarly. Thus, our overall
  recommendation is to resample the components at every stage of
  SVRN-HA, but reuse the sample for the small steps of the algorithm
  (this is what we used for the EMNIST and CIFAR-10 experiments).

\begin{figure} 
\centering     
\subfigure[Comparison of three variants of SVRN-HA (alongside SN-HA), depending on how
  frequently we resample data points used to compute the gradient estimate. We consider three
  variants of SVRN-HA: (1) sampling once for the entire optimization,
  (2) sampling once for each full gradient stage (per stage),
  (3) sampling in each small step (per step).] {\label{fig:ls-sampling}
\includegraphics[width=0.45\textwidth]{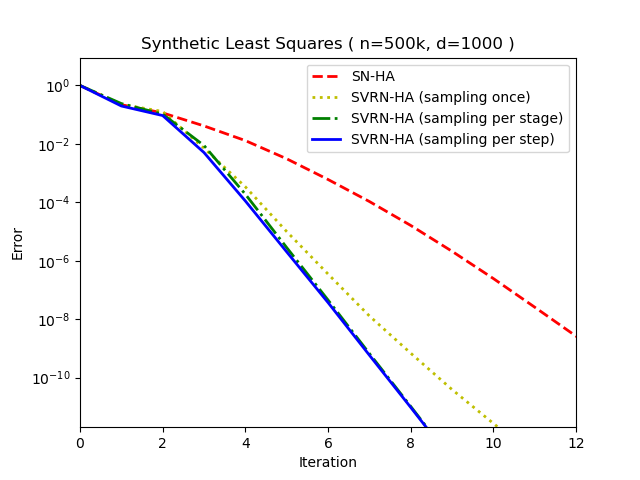} 
}%
\hspace{5mm}\subfigure[Comparison of SVRN-HA (alongside SN-HA) against
Subsampled Newton with Gradient Subsampling (SNGS-HA), which is
implemented exactly like SVRN-HA except without the
variance-reducing correction.]{\label{fig:ls-vr}\includegraphics[width=.45\textwidth]{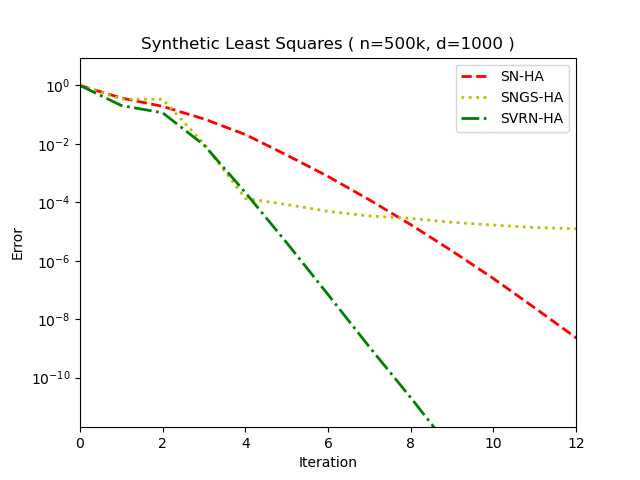}}
\caption{How different types of gradient estimation
  affect the convergence properties of SVRN.}\label{fig:ls-gradient}
\end{figure}

\paragraph{Effect of variance reduction.}
We next investigate the effect of variance reduction on the
convergence rate of SVRN. While gradient subsampling has been proposed
by many works in the literature on Subsampled Newton \cite{roosta2019sub}, these works have shown that the gradient sample size must be
gradually increased to retain fast local convergence (which means that
after a few iterations, we must use the full gradient). On the other
hand, in SVRN, instead of increasing the gradient sample size, we use
variance reduction with a fixed sample size, which allows us to
retain the accelerated convergence indefinitely.

To illustrate this point, in Figure \ref{fig:ls-vr}
we plot how the convergence behavior of our algorithm changes if
we take variance reduction out of it. The resulting method is called
Subsampled Newton with Gradient Subsampling (SNGS-HA). For this experiment, we resample the gradient
estimate at every small step (for both SNGS-HA and SVRN-HA). For the
sake of direct comparison, all of the other parameters are retained from
SVRN-HA. In particular, one iteration of SNGS-HA corresponds to
$\lfloor\log_2(n/d)\rfloor$ steps using resampled gradients, and Hessian
averaging occurs once every such iteration. As expected, we observe
that, while initially converging at a fast rate, eventually SNGS-HA
reaches a point where the subsampled gradient estimates are not
sufficiently accurate, resulting in a sudden dramatic drop-off in the
convergence rate, to the point where the method virtually stops
converging altogether. On the other hand, SVRN-HA
continues to converge at the same fast rate throughout the
optimization procedure without any reduction in performance. This
indicates that variance reduction does improve the accuracy of
gradient~estimates, especially when our goal is to converge to a
high-precision solution.

  \begin{figure} 
\centering   
\subfigure[Convergence comparison of SVRN and SN using fixed Hessian
  estimates (i.e., without Hessian averaging). Here, $h$ denotes the
  number of Hessian samples used to generate the estimate.]{\label{fig:ls-noavg}
\includegraphics[width=0.45\textwidth]{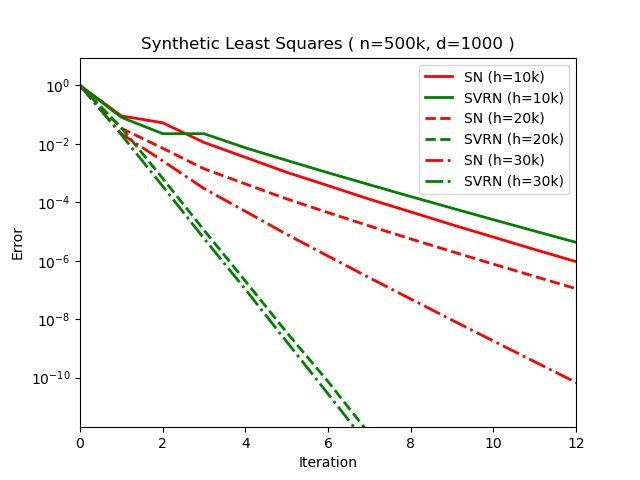}
}\hspace{5mm}%
\subfigure[Convergence comparison of SVRN-HA and SN-HA, with and without preconditioning
using a Randomized Hadamard Transform (RHT), for a high-coherence least squares
dataset.]{\label{fig:ls-coherence}\includegraphics[width=.45\textwidth]{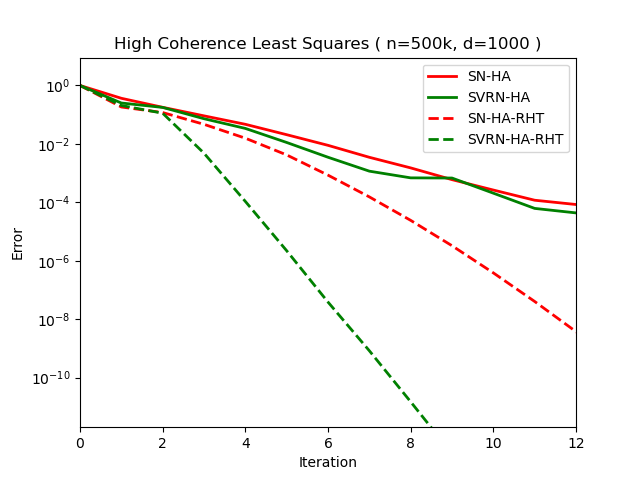}}
\caption{How Hessian sample size and data coherence affect
the convergence properties of SVRN.}\label{fig:ls-other}
\end{figure}

\paragraph{Effect of Hessian accuracy.} In our experiments, for
both SVRN and SN, we used Hessian averaging \cite{hessian-averaging}
to construct the Hessian estimates. This approach is
desirable in practice, since it gradually increases
the accuracy of the Hessian estimate as we progress in the optimization.
As a result, it is more robust to the Hessian sample size and we are
guaranteed to reach sufficient accuracy for SVRN to work
well. In the following experiment, we take Hessian averaging out of
the algorithms to provide a better sense of how the
performance of SVRN and SN depends on the accuracy of the provided
Hessian estimate. For simplicity, we focus here on least
squares, where the Hessian is the same everywhere, so we can simply
construct an initial Hessian estimate and then use it throughout the
optimization. However, our insights apply more broadly to local
convergence for general convex objectives. In
Figure~\ref{fig:ls-noavg}, we plot the performance of the algorithms
as 
we vary the accuracy of the subsampled Hessian estimates, where $h$
denotes the number of samples used to construct the estimate. In all
the results, we keep the gradient sample size and local
steps in SVRN fixed as before.

Remarkably, the performance of SVRN
is affected by the Hessian accuracy very differently than SN. We
observe that SVRN requires a certain level of Hessian accuracy to provide any
acceleration over SN. As soon as this level of Hessian accuracy is
reached (by increasing the Hessian sample size $h$), the peformance of
SVRN quickly jumps to the fast convergence 
we observed in the other experiments. Further increasing the accuracy
no longer appears to affect the convergence rate of SVRN. This is
in contrast to SN, whose convergence slowly improves as we increase
the Hessian sample size. This intriguing phenomenon is actually fully
predicted by our theory for SVRN (together with prior convergence
analysis for SN). Our convergence result (Theorem~\ref{t:svrn}) requires a sufficiently
accurate Hessian inverse estimate for SVRN to work with a unit step
size (which is what is used in SVRN-HA), but the actual rate of
convergence is independent of the Hessian accuracy (only the required number of
small steps is affected). We conclude that SVRN is more desirable than SN when we
have a small budget for Hessian samples.

\paragraph{Effect of high coherence.}
We next analyze the performance of SVRN and SN on a slightly modified
least squares task. For this experiment, we modify the data matrix
$\A$, by multiplying the $i$th row by $1/\sqrt{g_i}$ for each $i$, where $g_i$ is an independent random variable
distributed according to the Gamma distrution with shape $2$ and scale
$1/2$. This is a standard transformation designed to produce a matrix
with many rows having a high leverage score. Recall that the leverage
score of the $i$th row of $\A$ is defined as
$\ell_i=\a_i^\top(\A^\top\A)^{-1}\a_i$, see Appendix \ref{a:ls}. This
can be viewed as affecting the component-wise smoothness of the
objective, which hinders subsampling-based estimators of the Hessian
and the~gradient.

In Figure \ref{fig:ls-coherence}, we illustrate how
the performance of SVRN-HA and SN-HA degrades for the high-coherence least squares task,
and we also show how this can be addressed by relying on the ideas
developed in Section~\ref{s:ls} (and further discussed in Appendix \ref{a:ls}). First, notice that not only is the
convergence rate of both SVRN-HA and SN-HA worse on the high-coherence
dataset than on the previous least squares examples (e.g., compare
with Figure \ref{fig:ls-vr}), but also, the acceleration coming from
variance reduction is drastically reduced to the point of being
negligible. The former effect is primarily caused by the fact that
uniform Hessian subsampling is much less effective at producing
accurate approximations for high-coherence matrices, and this affects
both algorithms similarly (we note that one could construct an even more
highly coherent matrix, for which these methods would essentially stop
converging altogether). The latter effect is the consequence of the fact that gradient 
subsampling is also adversely affected by high coherence, so it becomes
nearly impossible to produce gradient estimates with uniform sampling
that would lead to an accelerated rate, even with variance
reduction. This corresponds to the regime of $\kappa\geq n$ in our
theory.

Fortunately, for least squares regression, this phenomenon can be
addressed easily. As outlined in Appendix~\ref{a:ls}, we can use one
of two strategies: (1) use importance sampling proportional to the
leverage scores of $\A$ for both the Hessian and gradient estimates; or
(2) precondition the problem using the Randomized Hadamard Transform
(RHT)  to uniformize all the leverage scores, and then use uniform
subsampling. Both of these methods require roughly $O(nd\log n)$
preprocessing cost and eliminate dependence on the condition
number for both SVRN-HA and SN-HA. The latter strategy is somewhat
more straightforward since it does not require modifying the
optimization algorithms, and we apply it here for our high coherence
least squares task: we let SVRN-HA-RHT and SN-HA-RHT denote the
two optimization algorithms ran after applying the RHT preconditioning to the problem. Note that this not
only improves the convergence rate of both methods but also brings
back the accelerated rate enjoyed by SVRN-HA in the previous
experiments. In fact, our least squares results (Theorem~\ref{c:least-squares} and Lemma \ref{l:gradient-ls}) can be directly
applied to SVRN-HA-RHT, so this method and its accelerated convergence
rate of $\tilde O(d/n)$ is
provably unaffected by any high-coherence matrices.

\section{Conclusions}

We propose and analyze Stochastic Variance-Reduced Newton (SVRN), a provably
effective strategy of incorporating variance reduction into popular
stochastic Newton methods for solving finite-sum minimization
tasks. We show that SVRN improves the local convergence complexity of Subsampled Newton (per data pass) from $O(\alpha\log(1/\epsilon))$ to $O\big(\frac{\log(1/\epsilon)}{\log(n)}\big)$, while
retaining all the benefits of second-order optimization, such
as a simple unit step size and easily scalable large-batch operations.

\bibliography{../pap}
\bibliographystyle{plain}

\appendix

 \section{Proofs for local convergence analysis of SVRN}
 \label{a:main-proofs}

In this section, we provide the proofs of our main technical results, i.e., local convergence
analysis for SVRN. First, we prove the result for the general case
(Theorem \ref{t:svrn}), then we prove the result for least squares
(Theorem \ref{c:least-squares}).

\subsection{Preliminaries}

First, let us recall the formal definitions of the standard Hessian
regularity assumptions used in Theorem \ref{t:svrn}. For all our
results, it is sufficient that the function $f$ satisfies either one
of these assumptions.

\begin{assumption}\label{a:lipschitz}
  Function $f:\R^d\rightarrow \R$ has Lipschitz continuous Hessian
  with constant $L$, i.e., $\|\nabla^2 f(\x)-\nabla^2 f(\x')\|\leq
L\,\|\x-\x'\|$ for all $\x,\x'\in\R^d$.
\end{assumption}
\begin{assumption}\label{a:self-concordant}
  Function $f:\R^d\rightarrow \R$ is self-concordant, i.e., for all
  $\x,\x'\in\R^d$, the function $\phi(t)=f(\x+t\x')$
  satisfies: $|\phi'''(t)|\leq 2(\phi''(t))^{3/2}$.
\end{assumption}

In the proof, we use the following version of Bernstein's
concentration inequality for random vectors \cite[Corollary 4.1]{minsker2017some}.
\begin{lemma}\label{l:vector-bernstein}
Let $\v_1,...,\v_m\in\R^d$ be independent random vectors such
that $\E[\v_i] = \zero$ and $\|\v_i\|\leq R$ almost surely. Denote
$\sigma^2:=\sum_{i=1}^m\E\,\|\v_i\|^2$. Then, for all $t^2\geq
\sigma^2+tR/3$, we have
\begin{align*}
  \Pr\bigg\{\Big\|\sum_{i=1}^m\v_i\Big\| > t\bigg\}
  \leq 28\exp\Big(-\frac{t^2/2}{\sigma^2+tR/3}\Big).
\end{align*}
\end{lemma}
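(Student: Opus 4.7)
The plan is to reduce the vector concentration problem to a matrix one via Hermitian dilation, and then apply a matrix Bernstein inequality with intrinsic-dimension control, so as to avoid any ambient-dimension factor in the prefactor. Concretely, I would introduce the self-adjoint dilation
\begin{align*}
\Phi(\v) = \begin{pmatrix} 0 & \v^\top \\ \v & \mathbf{0}_{d\times d} \end{pmatrix} \in \R^{(d+1)\times(d+1)},
\end{align*}
which satisfies $\|\Phi(\v)\|_{\mathrm{op}} = \|\v\|$ and $\Phi(\v)^2 = \mathrm{diag}(\v^\top\v,\,\v\v^\top)$. Writing $\Phi_i = \Phi(\v_i)$, the problem reduces to a tail bound on $\|\sum_i \Phi_i\|_{\mathrm{op}}$. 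The conditions $\E[\Phi_i] = 0$ and $\|\Phi_i\| \leq R$ are inherited from the $\v_i$, while the matrix-variance parameter $V := \sum_i \E[\Phi_i^2]$ is block-diagonal with trace $2\sigma^2$ and operator norm at most $\sigma^2$ (using that $\|\sum_i \E[\v_i\v_i^\top]\| \leq \tr \sum_i \E[\v_i\v_i^\top] = \sigma^2$).

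Next, I would invoke the standard matrix Laplace-transform machinery. Since $\E \exp(\lambda \Phi_i) \preceq \exp(g(\lambda) \E[\Phi_i^2])$ with $g(\lambda) = (\lambda^2/2)/(1 - \lambda R/3)$ for $\lambda \in (0, 3/R)$, Tropp's master tail bound (applied to both $\pm\sum_i \Phi_i$) yields
\begin{align*}
\Pr\!\Bigl(\Big\|\textstyle\sum_i \Phi_i\Big\|_{\mathrm{op}} > t\Bigr) \leq 2\,e^{-\lambda t}\cdot \tr\exp\!\bigl(g(\lambda)\, V\bigr).
\end{align*}
I would then control the trace-exponential via the intrinsic-dimension refinement of matrix Bernstein (Tropp's monograph; Minsker 2017): for positive semidefinite $V$, $\tr\exp(\theta V)$ can be bounded in terms of $\mathrm{intdim}(V) := \tr(V)/\|V\|$ and $\theta\|V\|$ alone, with no ambient-dimension dependence. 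In our case $\mathrm{intdim}(V) \leq 2\sigma^2/\sigma^2 = 2$, so the prefactor is genuinely dimension-free. Optimizing $\lambda = t/(\sigma^2 + tR/3)$ in the usual Bernstein fashion then produces the exponent $-t^2/\bigl(2(\sigma^2 + tR/3)\bigr)$, and the hypothesis $t^2 \geq \sigma^2 + tR/3$ is exactly what makes the nuisance factors from the intrinsic-dimension bound collapse into a universal constant.

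The main obstacle is bookkeeping the prefactor carefully enough to arrive at the explicit constant $28$: one must combine the factor of $2$ from applying the tail bound to $\pm\sum_i \Phi_i$, the intrinsic-dimension factor of at most $2$, and the nuisance factor of the form $\varphi(\theta\|V\|)$ that appears in the intrinsic-dimension trace-exponential estimate. The threshold $t^2 \geq \sigma^2 + tR/3$ is what guarantees $\theta\|V\| \gtrsim 1$ at the optimal $\lambda$, which in turn bounds $\varphi$ by an absolute constant; the precise value $28$ then follows from tracing through Minsker's peeling argument, which separates the contribution of the leading eigenspace of $V$ from its tail. The naive substitution $\tr\exp(\theta V)\leq (d+1)\exp(\theta\|V\|)$ would be fatal here (it reintroduces $d$), so the delicate step is the replacement of this inequality by its intrinsic-dimension counterpart together with verifying that the stated threshold is sharp enough to keep the resulting constant universal.
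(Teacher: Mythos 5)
The paper does not actually prove this lemma: it is imported verbatim as Corollary 4.1 of Minsker (2017), so there is no internal argument to compare yours against. Your sketch reconstructs, correctly in outline, the standard route by which such dimension-free vector bounds are obtained (Hermitian dilation plus an intrinsic-dimension matrix Bernstein inequality), and the computations you do carry out are right: $\Phi(\v)^2=\mathrm{diag}(\|\v\|^2,\,\v\v^\top)$, so $V=\sum_i\E[\Phi_i^2]$ has trace $2\sigma^2$ and operator norm exactly $\sigma^2$ (the scalar block equals $\sigma^2$ and dominates), giving intrinsic dimension at most $2$; the Bernstein MGF bound with $g(\lambda)=(\lambda^2/2)/(1-\lambda R/3)$ and the choice $\lambda=t/(\sigma^2+tR/3)$ give the stated exponent; and the hypothesis $t^2\geq\sigma^2+tR/3$ is indeed what keeps the nuisance factor of the intrinsic-dimension estimate bounded by an absolute constant. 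One small simplification: the factor of $2$ from treating $\pm\sum_i\Phi_i$ is unnecessary, since $\Phi$ is linear, hence $\sum_i\Phi_i=\Phi\big(\sum_i\v_i\big)$ has symmetric spectrum and $\lambda_{\max}$ alone already equals $\|\sum_i\v_i\|$.

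The genuine gap is that your argument is not self-contained at the one point that distinguishes this statement from a generic proof plan: the explicit constant $28$. You defer it to ``tracing through Minsker's peeling argument,'' which is precisely the content of the cited Corollary 4.1 (Minsker's operator Bernstein bound with effective rank $2$ under the stated threshold), so as a blind proof your write-up bottoms out in the very result being quoted. If a self-contained derivation were the goal, you would need to actually carry out the intrinsic-dimension trace-exponential estimate (e.g., via Tropp's intrinsic-dimension matrix Bernstein, which with intrinsic dimension at most $2$ gives a small absolute constant under an analogous threshold) and check that the resulting constant is at most $28$ under exactly the condition $t^2\geq\sigma^2+tR/3$; without that verification the stated prefactor and the role of the threshold remain assumed rather than proved. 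For the purposes of this paper, however, citing Minsker as the source—as the authors do—is the intended resolution.
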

We also use the following lemma to convert from convergence in the
norm, $\|\x-\x^*\|_{\H}$, to convergence in excess loss,
$f(\x)-f(\x^*)$, in the neighborhood around the optimum $\x^*$. The
proof of this lemma, given in Appendix~\ref{a:conversion}, uses Quadratic Taylor's Theorem.
\begin{lemma}\label{l:conversion}
If $f$ satisfies Assumption \ref{a:convex} and either Assumption \ref{a:lipschitz} or
\ref{a:self-concordant}, then for any $\epsilon \in[0,1]$ and $\x\in
U_f(\epsilon_l)$, we have:
  \vspace{-1mm}
  \begin{align*}
    \nabla^2f(\x)\approx_{\epsilon_l}\nabla^2f(\x^*)\qquad\text{and}\qquad
    f(\x)-f(\x^*)\approx_{\epsilon_l} \frac12\|\x-\x^*\|_{\nabla^2f(\x^*)}^2.
  \end{align*}
\end{lemma}

\subsection{Proof of Theorem \ref{t:svrn}}
To simplify the notation, we will drop the subscript $s$, so that
$\xbt = \xbt_s$ and $\gbt=\gbt_s$. Also, let us define $\gbh(\x) = 
\frac1m\sum_{i=1}^m\nabla\psi_i(\x)$. We use 
$\g(\x)=\nabla f(\x)$, $\g_t=\g(\x_t)$, $\H_t=\nabla^2f(\x_t)$, $\H=\nabla^2f(\x^*)$, $\gbh_t=\gbh(\x_t)$, and $\bar\g_t=\gbh_t-\gbh(\xbt)+\gbt$ as
shorthands. Also, we will use $\Delta_t=\x_t-\x^*$. We start by
splitting up the error bound into two terms: the first one is an error
term that would arise if we were using the exact gradient $\g_t$
instead of the gradient estimate $\bar\g_t$; and the second term
addresses the error coming from the noise in the gradient estimate.
Initially, we
use the error $\|\Delta_t\|_{\H}$ to analyze the convergence
rate in one step of the procedure, where recall that
$\|\v\|_{\M}=\sqrt{\v^\top\M\v}$. We then convert that to get 
convergence in function~value via Lemma~\ref{l:conversion}. 

We first address the assumption that $\Hbt$ is an
$\alpha$-approximation of $\H_t$, as defined by the condition \eqref{eq:alpha}. Note that, via
Lemma~\ref{l:conversion}, for any $\x_t\in U_f(\epsilon_l)$ we have that $\H_t\approx_{\epsilon_l}\H$,
which 
for a sufficiently small $\epsilon_l$ implies that $\H_t$ is a
$1.1$-approximation of $\H$ in the sense of \eqref{eq:alpha}. This, in turn implies that $\Hbt$ is a
$1.1\alpha$-approximation of $\H$, because $\Hbt\preceq
\sqrt\alpha\H_t\preceq \sqrt{1.1\alpha}\H$ (the other direction is
analogous). For the sake of simplicity, we will 
replace $1.1\alpha$ with $\alpha$ and say that $\Hbt$ is an
$\alpha$-approximation of $\H$ (this can be easily accounted for by
adjusting the constants at the end).

Now, suppose that after $t$ inner iterations, we get $\x_t\in
U_f(\epsilon_l)$ satisfying $\|\Delta_t\|_{\H}\leq\|\Delta_0\|_{\H}$. Our decomposition of the error
into two terms proceeds as follows, where we use $\pbt_t = \Hbt^{-1}\bar\g_t$:
\begin{align}
  \|\Delta_{t+1}\|_{\H}
  &=\|(\x_t  - \eta\pbt_t) - \x^*\|_{\H}\nonumber
    \\
  &= \|\Delta_t- \eta\Hbt^{-1}\g_t  + \eta\Hbt^{-1}\g_t -
    \eta\Hbt^{-1}\bar\g_t\|_{\H}\nonumber
  \\
  &\leq \|\Delta_t -\eta\Hbt^{-1}\g_t\|_{\H}+\eta\|\Hbt^{-1}(\g_t-\bar\g_t)\|_{\H}
    \label{eq:decomposition}
\end{align}
To bound the second term in \eqref{eq:decomposition},
we first observe that $\Hbt^{-1}\preceq \sqrt\alpha\H^{-1}$, 
which in turn yields
$\H^{1/2}\Hbt^{-1}\H^{1/2}\preceq \sqrt\alpha\,\I$. Thus, we can
write $\|\H^{1/2}\Hbt^{-1}\H^{1/2}\|\leq \sqrt\alpha$ and we get:
\begin{align*}
  \|\Hbt^{-1}(\g_t-\bar\g_t)\|_{\H}
  & =
    \|\H^{1/2}\Hbt^{-1}\H^{1/2}\H^{-1/2}(\g_t-\bar\g_t)\|
  \\
  &\leq  \|\H^{1/2}\Hbt^{-1}\H^{1/2}\|\cdot
    \|\H^{-1/2}(\g_t-\bar\g_t)\|
  \\
  &\leq \sqrt{\ch}\cdot \|\g_t - \bar\g_t\|_{\H^{-1}}
\end{align*}
We now break $\|\g_t - \bar\g_t\|_{\H^{-1}}$
 down into two parts, introducing $\gbh(\x^*)$ and separating
$\gbh(\xbt)$ from $\gbh_t$: 
\begin{align*}
  \|\g_t - \bar\g_t\|_{\H^{-1}}
  &= \|
  \g_t
   - (\gbh_t - \gbh(\xbt) + \gbt)\|_{\H^{-1}}
  \\
  &\leq \|\g_t - (\gbh_t - \gbh(\x^*)) \|_{\H^{-1}} +
   \|\gbt - (\gbh(\xbt)-\gbh(\x^*)) \|_{\H^{-1}}.
\end{align*}
We
bound the above two terms using the following lemma, which gives a new
high-probability error bound for the variance reduced gradient
estimates in the large mini-batch regime which, unlike results from
prior work
that hold in expectation, crucially relies on the iterate being in the
local neighborhood $U_f(1)$ around the optimum $\x^*$.
\begin{lemma}\label{l:gradient}
There is an absolute constant $C>0$
  such that for any $\x\in U_f(1)$, letting
$\H=\nabla^2 f(\x^*)$, the
  gradient estimate $\gbh(\x) = \frac1m\sum_{i=1}^m\nabla\psi_i(\x)$ using
  $m\geq \kappa\log(1/\delta)$ samples, with probability $1-\delta$ satisfies: 
  \begin{align*}
\big\|\gbh(\x)-\gbh(\x^*)-\nabla f(\x)\big\|_{\H^{-1}}^2\!\leq
    C\log(1/\delta) \frac{\kappa}{m}\|\x-\x^*\|_{\H}^2.
  \end{align*}
\end{lemma}
\begin{proof}
We will apply Bernstein's concentration inequality for random vectors
(Lemma \ref{l:vector-bernstein}) to $\v_i = 
\nabla\psi_i(\x)-\nabla\psi_i(\x^*) - \nabla f(\x)$.
First, observe that $\E\,\nabla\psi_i(\x)=\nabla f(\x)$ and $\E\,\nabla\psi_i(\x^*)=\nabla
f(\x^*)=\zero$, so in particular, $\E[\v_i]=\zero$.

In the next step, we will use
the fact that for any $\lambda$-smooth function $g$, we have
$\|\nabla g(\x)\|^2\leq 2\lambda\cdot(g(\x)-\min_{\x'}g(\x'))$, which
follows because:
\begin{align*}
  \min_{\x'}g(\x')
  &\leq g\big(\x-\tfrac1{\lambda}\nabla g(\x)\big)
  \\
  &\leq g(\x) -
  \frac1\lambda\|\nabla g(\x)\|^2 + \frac\lambda2\,\frac1{\lambda^2}\|\nabla
    g(\x)\|^2 \\
  &= g(\x)-\frac1{2\lambda}\|\nabla g(\x)\|^2.
\end{align*}
We will use this fact once on $f$, and also a second time, on the
function
$g(\x)=\psi_i(\x)-\psi_i(\x^*)-(\x-\x^*)^\top\nabla\psi(\x^*)$, which
is $\lambda$-smooth because $\psi_i$ is $\lambda$-smooth,
observing that $\nabla g(\x) = \nabla \psi_i(\x)-\nabla\psi_i(\x^*)$
and that $\min_{\x'}g(\x') = g(\x^*)=0$. Thus, we have
\begin{align*}
  \|\v_i\|^2
  &\leq 2\|\nabla\psi_i(\x)-\nabla\psi_i(\x^*)\|^2 + 2\|\nabla
    f(\x)\|^2
  \\
  &\leq 4\lambda\cdot\big(\psi_i(\x) - \psi_i(\x^*) -
    (\x-\x^*)^\top\nabla\psi(\x^*)\big)
+ 4\lambda\cdot \big(f(\x)-f(\x^*)\big)
  \\
  &\leq 2\lambda^2\|\x-\x^*\|^2 + 2\lambda^2\|\x-\x^*\|^2
= 4\lambda^2\|\x-\x^*\|^2,
\end{align*}
where in the last step we used again that $\psi_i$ and $f$ are $\lambda$-smooth.
To bound the expectation $\E\,\|\v_i\|^2$, we use the intermediate
inequality from the above derivation, obtaining:
\begin{align*}
  \E\,\|\v_i\|^2
  &=\E\big[\|\nabla\psi_i(\x)-\nabla\psi_i(\x^*)\|^2\big] -
    2\,\E\big[\nabla\psi_i(\x)-\nabla\psi_i(\x^*)\big]^\top\nabla f(\x) +
    \|\nabla f(\x)\|^2
  \\
  &=\E\big[\|\nabla\psi_i(\x)-\nabla\psi_i(\x^*)\|^2\big] - \|\nabla
    f(\x)\|^2
    \\
  &\leq \E\big[ \|\nabla\psi_i(\x)-\nabla\psi_i(\x^*)\|^2\big]
  \\
  &\leq \E\Big[2\lambda\cdot\big(\psi_i(\x)-\psi_i(\x^*) -
    (\x-\x^*)^\top\nabla\psi_i(\x^*)\big)\Big]
  \\
  &=2\lambda\cdot\big(f(\x) - f(\x^*) - (\x-\x^*)\nabla f(\x^*)\big)
  \\
  &=2\lambda \cdot \big(f(\x)-f(\x^*)\big).    
\end{align*}
We now use the assumption that $\x\in U_f(1)$, which implies
via Lemma \ref{l:conversion} that $f(\x)-f(\x^*)\leq
2\cdot\frac12\|\x-\x^*\|_{\H}^2=\|\x-\x^*\|_{\H}^2$. 
Thus, we can use Lemma \ref{l:vector-bernstein} with $R = 
2\lambda\|\x-\x^*\|$ and $\sigma^2 = 2m\lambda\|\x-\x^*\|_{\H}^2$, as well as $\mu$-strong convexity of $f$,  obtaining
that, for some absolute constant $C$, with
probability $1-\delta$, we have:
\begin{align*}
  \|\gbh(\x) - \gbh(\x^*) - \nabla f(\x)\|_{\H^{-1}}^2
  &\leq \frac1{\mu}\Big\|\frac1m\sum_{i=1}^m\v_i\Big\|^2
  \\
  &\leq \frac {C}{\mu}\Big(\frac{\sigma^2\log(1/\delta)}{m^2} +
    \frac{R^2\log^2(1/\delta)}{m^2}\Big)
  \\
  &\leq \frac C\mu\bigg(\frac{2\lambda\|\x-\x^*\|_{\H}^2\log(1/\delta)}{m}
    + \frac{4\lambda^2\|\x-\x^*\|^2\log^2(1/\delta)}{m^2}\bigg)
  \\
  &\leq 4C\bigg(\frac{\kappa\log(1/\delta)}{m} +
    \frac{\kappa^2\log^2(1/\delta)}{m^2}\bigg)\cdot
    \|\x-\x^*\|_{\H}^2
  \\
  &\leq 8C\log(1/\delta)\cdot \frac{\kappa}{m}\,\|\x-\x^*\|_{\H}^2,
\end{align*}
where in the last step we used that $m\geq \kappa\log(1/\delta)$.
\end{proof}

Letting $\epsilon_g\! =\!
\sqrt{2C\log(t/\delta)\kappa/m}$, Lemma \ref{l:gradient} implies that with probability $1-\delta/t^2$,
\begin{align*}
\|\g_t - (\gbh_t - \gbh(\xbt) + \gbt)\|_{\H^{-1}}
&\leq \epsilon_g\big(\|\Delta_t\|_{\H} +
\|\Delta_0\|_{\H}\big)\leq 2\epsilon_g\|\Delta_0\|_{\H}.
\end{align*}

Finally, we return to the first term in
\eqref{eq:decomposition}, i.e.,
$\|\Delta_t-\eta\Hbt^{-1}\g_t\|_{\H}$. To control this term we
introduce the following lemma which is potentially of independent
interest to the local convergence analysis of Newton-type methods.

\begin{lemma}\label{l:newton}
Suppose that $f$ satisfies Assumption \ref{a:convex} and either one of
the Assumptions \ref{a:lipschitz} or \ref{a:self-concordant}, and take
any $\x\in U_f(\epsilon_l)$ (see
Definition~\ref{d:neighborhood}) for $\epsilon_l\leq 1/c\alpha$
for a sufficiently large absolute constant $c>0$. Let $\H=\nabla^2f(\x^*)$ and consider a pd matrix $\tilde \H$ that
satisfies $\frac1{\sqrt\alpha}\,\H\preceq\tilde\H\preceq\sqrt\alpha\,\H$.
Then, for $\eta:= \min\{\sqrt{2/\alpha},1\}$, we have:
\begin{align*}
  \|\x - \eta\Hbt^{-1}\nabla f(\x) - \x^* \|_{\H}
  \leq \Big(1-\frac1{1.9\alpha}\Big)\|\x-\x^*\|_{\H}.
  \end{align*}
\end{lemma}
\begin{proof}
  Let $\Delta_0:=\x-\x^*$ and $\Delta_1:=\x-\eta\tilde\H^{-1}\nabla
  f(\x)-\x^*$. Using that $\nabla f(\x^*)=\zero$, we have: 
  \begin{align*}
    \Delta_1
    &= \Delta_0 - \eta\tilde\H^{-1}\nabla f(\x)
      \\
    &= \Delta_0 - \eta\tilde\H^{-1}(\nabla f(\x) - \nabla f(\x^*))
    \\
    & = \Delta_0
      -\eta\tilde\H^{-1}\int_0^1\nabla^2f(\x^*+\theta\Delta_0)\Delta_0
      d\theta
    \\
    &=(\I - \eta\tilde\H^{-1}\bar\H)\Delta_0,
  \end{align*}
  where we defined
  $\bar\H:=\int_0^1\nabla^2f(\x^*+\theta\Delta_0)d\theta$. It follows
  that we can bound the norm of $\Delta_1$ using a norm defined by the
  matrix $\bar\H$:
  \begin{align*}
    \|\Delta_1\|_{\bar\H}
    &=
      \|\bar\H^{1/2}(\I-\eta\tilde\H^{-1}\bar\H)\Delta_0\|
    \\
    &=\|(\I-\eta\H^{1/2}\tilde\H^{-1}\bar\H^{1/2})\bar\H^{1/2}\Delta_0\|
    \\
    &\leq\|\I-\eta\bar\H^{1/2}\tilde\H^{-1}\bar\H^{1/2}\|\cdot\|\Delta_0\|_{\bar\H}.
  \end{align*}
  Observe that for any $\theta\in[0,1]$, the vector $\x^*+\theta\Delta_0$
  belongs to $U_f(\epsilon_l)$, which via Lemma \ref{l:conversion} implies that
  \begin{align*}
    \nabla^2f(\x^*+\theta\Delta_0)\approx_{\epsilon_l}\H\quad\forall\theta\in[0,1].
  \end{align*}
where $\H=\nabla^2f(\x^*)$.  In particular, this means that $\bar\H\approx_{\epsilon_l}\H$,
  which, combined with the $\alpha$-approximation property of
  $\tilde\H$, 
  allows us to write the following:
  \begin{align*}
    \tilde\H^{-1}\preceq \sqrt\alpha\H^{-1}\preceq
    \sqrt\alpha(1+\epsilon_l)\bar\H^{-1}\quad\text{and}\quad
    \tilde\H^{-1}\succeq \frac1{\sqrt\alpha}\H^{-1}\succeq \frac{1-\epsilon_l}{\sqrt\alpha}\bar\H^{-1}.
  \end{align*}
  Putting these inequalities together, we obtain that:
  \begin{align*}
    \eta\frac{1-\epsilon_l}{\sqrt\alpha}\I
    &\preceq\eta\bar\H^{1/2}\tilde\H^{-1}\bar\H^{1/2}
    \preceq \eta\sqrt\alpha(1+\epsilon_l)\I.
  \end{align*}
Now, using the fact that $\eta = \min\{\sqrt{2/\alpha},1\}$, we
conclude that:
\begin{align*}
  \|\I-\eta\bar\H^{1/2}\tilde\H^{-1}\bar\H^{1/2}\|
  &\leq
  \max\Big\{\eta\sqrt\alpha(1+\epsilon_l)-1,1-\eta\frac{1-\epsilon_l}{\sqrt\alpha}\Big\}
  \\
  &\leq\max\Big\{\sqrt2(1+\epsilon_l)-1,
    1-\frac{\sqrt2(1-\epsilon_l)}{\alpha},1-\frac{1-\epsilon_l}{\sqrt{2}}\Big\}
  \\
  &\leq \max\Big\{1-\frac1{1.8},\  1-\frac1\alpha\Big\}\leq 1-\frac1{1.8\alpha},
\end{align*}
where we used that $\epsilon_l\leq 1/c$ for a sufficiently large
constant $c>0$ such that
$\max\{\sqrt{2}(1+\epsilon_l)-1,1-\frac{1-\epsilon_l}{\sqrt 2}\}\leq 1-\frac1{1.8}$. Now, we analyze
convergence in the norm induced by $\H$, instead of $\bar\H$, by
relying again on the fact that $\bar\H\approx_{\epsilon_l}\H$,
obtaining:
\begin{align*}
  \|\Delta_1\|_{\H}
  &\leq  \frac1{\sqrt{1-\epsilon_l}}\|\Delta_1\|_{\bar\H}\leq
                     \frac1{\sqrt{1-\epsilon_l}}  \Big(1-\frac1{1.8\alpha}\Big) \|\Delta_0\|_{\bar\H}
  \\
  &\leq
\sqrt{\frac{1+\epsilon_l}{1-\epsilon_l}}\Big(1-\frac1{1.8\alpha}\Big)\|\Delta_0\|_{\H}\leq
  \Big(1-\frac1{1.9\alpha}\Big)\|\Delta_0\|_{\H},
\end{align*}
where the last step requires $\epsilon_l = 1/c\alpha$ for sufficiently
large absolute constant $c>0$.
\end{proof}

Using Lemma \ref{l:newton} to bound the first term in
\eqref{eq:decomposition}, we obtain that:
\begin{align*}
\|\Delta_t -\eta\Hbt_t^{-1}\g_t\|_{\H}
  &\leq \Big(1-\frac1{1.9\alpha}\Big)\|\Delta_t\|_{\H}.
\end{align*}

Putting everything together, we obtain the following bound for the
error of the update that uses the stochastic variance-reduced gradient
estimate:
\begin{align*}
  \|\Delta_{t+1}\|_{\H}
  &= \|\Delta_t-\eta\pbt_t\|_{\H}
  \\
  &\leq
    \|\Delta_t-\eta\Hbt^{-1}\g_t\|_{\H} +
    \eta\|\Hbt^{-1}(\g_t-\bar\g_t)\|_{\H}
  \\
  &\leq
\Big(1-\frac1{1.9\alpha}\Big)\|\Delta_t\|_{\H}
    + 2\eta\sqrt{\ch}\epsilon_g \|\Delta_0\|_{\H}\\
  &\leq \Big(1-\frac1{1.9\alpha}\Big) \|\Delta_t\|_{\H} +  3\epsilon_g \|\Delta_0\|_{\H}.
\end{align*}
Note that, as
long as $3\epsilon_g\leq \frac1{2\alpha}$ (which can be ensured by
our assumption on $m$), this implies that
$\|\Delta_{t+1}\|_{\H}\leq\|\Delta_0\|_{\H}$ and so $\x_{t+1}\in
U_f(\epsilon_l)$. Thus, our analysis can be applied recursively at
each inner iteration.
To expand the error recursion, observe that if we apply a union bound over the
high-probability events in Lemma \ref{l:gradient} for each inner
iteration $t$ using failure probability $\delta_t = \delta/t^2$, then they hold for
all $t$ with probability at least $1-\sum_{t=1}^\infty \delta/t^2\geq
1-\delta\pi^2/6$. We obtain:
\begin{align*}
  \|\Delta_{t}\|_{\H}
  &\leq \Big(1-\frac1{1.9\alpha}\Big)\|\Delta_{t-1}\|_{\H} +
    3\epsilon_g\|\Delta_0\|_{\H}
    \\
  &\leq \Big(1-\frac1{1.9\alpha}\Big)^t\|\Delta_0\|_{\H} +
  \bigg(\sum_{i=0}^{t-1}\Big(1-\frac1{1.9\alpha}\Big)^i\bigg)\cdot
3\epsilon_g\|\Delta_0\|_{\H}
  \\
  &\leq \bigg(\Big(1-\frac1{1.9\alpha}\Big)^t+9\alpha\epsilon_g\bigg)\cdot \|\Delta_0\|_{\H}.
\end{align*}
Applying Lemma \ref{l:conversion}, we convert this to convergence
in function value:
\begin{align*}
  f(\x_t)-f(\x^*)
  &\leq \frac1{1-\epsilon_l}\frac12\|\Delta_t\|_{\H}^2
  \\
  &\leq
    \frac1{1-\epsilon_l}\frac12\bigg(\Big(1-\frac1{1.9\alpha}\Big)^{t} +
    9\alpha\epsilon_g\bigg)^2\|\Delta_0\|_{\H}^2
  \\
  &\leq
    \frac{1+\epsilon_l}{1-\epsilon_l}\bigg(\Big(1-\frac1{1.9\alpha}\Big)^{2t} +
    9^2\alpha^2\epsilon_g^2\bigg)\cdot(f(\x_0)-f(\x^*))
  \\
  &\leq \bigg(\Big(1-\frac1{2\alpha}\Big)^t +
    C'\alpha^2\frac{\kappa\log(t/\delta)}{m}\bigg)
    \cdot(f(\x_0)-f(\x^*)),
\end{align*}
where $C'$ is an absolute constant, and we again used that
$\epsilon_l\leq 1/c\alpha$ for a sufficiently large $c$, thus
concluding the proof.

\subsection{Proof of Lemma \ref{l:conversion}}
\label{a:conversion}
First, we show that the Hessian at $\x\in U_f(\epsilon_l)$ is an
$\epsilon_l$-approximation of the Hessian at the optimum $\x^*$. This
is broken down into two cases, depending on which of the two
Assumptions \ref{a:lipschitz} and \ref{a:self-concordant} are
satisfied. 

Case 1: Assumption \ref{a:lipschitz} (Lipschitz Hessian). \
Using
the shorthand $\H=\nabla^2 f(\x^*)$ and the fact that strong convexity
(Assumption \ref{a:convex}) implies that $\nabla^2f(\x)\succeq \mu\I$,
we have:
\begin{align*}
  \|\H^{-1/2}(\nabla^2f(\x)-\H)\H^{-1/2}\|
    \leq
  \frac1\mu\|\nabla^2f(\x)-\H\|
  \leq \frac L\mu\|\x-\x^*\|
\leq
  \frac{L}{\mu^{3/2}}\|\x-\x^*\|_{\H}\leq \epsilon_l,
\end{align*}
which implies that $\nabla^2f(\x)\approx_{\epsilon_l}\!\H$. 

Case 2: Assumption \ref{a:self-concordant} (Self-concordance). \
The fact that $\nabla^2f(\x)\approx_{\epsilon_l}\!\H$ follows from
the following property of self-concordant functions \cite[Chapter
9.5]{boyd2004convex}, which holds when $\|\x-\x^*\|_{\H}<1$:
\[(1-\|\x-\x^*\|_{\H})^2\cdot\H\preceq\nabla^2 f(\x)\preceq (1-\|\x-\x^*\|_{\H})^{-2}\cdot\H,\]
where we again let $\H=\nabla^2 f(\x^*)$. 

We next use a version of Quadratic Taylor's Theorem, as given
below. See Theorem 3 in Chapter 2.6 of
  \cite{jerrard2018multivariable} and Chapter 2.7 in \cite{folland2002advanced}.
\begin{lemma}
  Suppose that $f:\R^d\rightarrow \R$ has continuous first and second
  derivatives. Then, for any $\a$ and $\v$, there exists $\theta\in(0,1)$
  such that:
  \begin{align*}
    f(\a+\v) = f(\a) + \nabla f(\a)^\top\v + \frac12\v^\top\nabla^2 f(\a+\theta\v)\v.
  \end{align*}
\end{lemma}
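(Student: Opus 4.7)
The plan is to reduce the multivariable statement to the one-dimensional Taylor theorem with Lagrange remainder, which is a classical consequence of Rolle's theorem. Concretely, I would fix $\a,\v\in\R^d$ and define the auxiliary scalar function $\phi:[0,1]\to\R$ by $\phi(t)=f(\a+t\v)$. Since $f$ has continuous first and second derivatives, $\phi$ inherits continuous first and second derivatives on $[0,1]$, and the multivariable chain rule gives
\begin{align*}
\phi'(t) &= \nabla f(\a+t\v)^\top\v,
\\
\phi''(t) &= \v^\top\nabla^2 f(\a+t\v)\,\v.
\end{align*}

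Next I would apply the one-dimensional Taylor theorem with Lagrange remainder to $\phi$ on $[0,1]$, which asserts the existence of some $\theta\in(0,1)$ such that
\[
\phi(1) = \phi(0) + \phi'(0)\cdot 1 + \tfrac{1}{2}\phi''(\theta)\cdot 1^2.
\]
Substituting the expressions for $\phi(0)=f(\a)$, $\phi(1)=f(\a+\v)$, $\phi'(0)=\nabla f(\a)^\top\v$, and $\phi''(\theta)=\v^\top\nabla^2 f(\a+\theta\v)\v$ immediately gives the desired identity.

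The one-dimensional Taylor identity itself can be proved (if needed for completeness) by the standard trick of choosing the constant $R$ so that the auxiliary function $g(t) = \phi(1)-\phi(t)-\phi'(t)(1-t)-R(1-t)^2$ satisfies $g(0)=0=g(1)$, then invoking Rolle's theorem to find $\theta\in(0,1)$ with $g'(\theta)=0$, which after simplification forces $R=\tfrac12\phi''(\theta)$. There is no real obstacle here; the only subtle points are verifying the chain-rule computation of $\phi''$ (which uses continuity of $\nabla^2 f$ to interchange the order of differentiation cleanly) and ensuring $\theta$ lies strictly in the open interval $(0,1)$, both of which are standard. I would therefore keep the proof short, citing the one-dimensional theorem as the black-box input and doing only the chain-rule reduction explicitly.
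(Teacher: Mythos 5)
Your proof is correct. The paper does not prove this lemma at all—it is invoked as a black-box citation to the textbooks of Jerrard and Folland—and your argument (restricting $f$ to the line segment via $\phi(t)=f(\a+t\v)$, computing $\phi'$ and $\phi''$ by the chain rule, and applying the one-dimensional Taylor theorem with Lagrange remainder, itself provable by Rolle's theorem) is precisely the standard proof those references give, so there is nothing to add or correct.
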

Applying Talyor's theorem with $\a=\x^*$ and $\v=\x-\x^*$, there is a
$\z=\x^*+\theta(\x-\x^*)$ such that:
\begin{align*}
  f(\x)-f(\x^*) = \frac12\|\x-\x^*\|_{\nabla^2 f(\z)}^2,
\end{align*}
where we use that $\nabla f(\x^*)=\zero$. Since we assumed that $\x\in
U_f(\epsilon_l)$, and naturally also $\x^*\in U_f(\epsilon_l)$, this
means that $\z\in U_f(\epsilon_l)$, given that $U_f(\epsilon_l)$ is
convex. Thus, using that $\nabla^2f(\z)\approx_{\epsilon_l}\H$, we have
$\|\x-\x^*\|_{\nabla^2 f(\z)}^2\approx_{\epsilon_l}\|\x-\x^*\|_{\H}^2$.

\subsection{Proof of Theorem \ref{c:least-squares}}
\label{a:ls}
In this section, we discuss how the convergence analysis of SVRN can
be adapted to using leverage score sampling when solving a least
squares task (proving Theorem \ref{c:least-squares}).

Consider an expected risk minimization problem $f(\x)=\E[\psi(\x)]$, where
$\psi=\frac1{np_I}\psi_I$ and $I$ is an index from $\{1,...,n\}$,
sampled according to some importance sampling distribution
$p$. More specifically, consider a least squares task, where the
components are given by $\psi_i(\x) =
\frac1{2}(\a_i^\top\x-y_i)^2$. Then, the overall minimization task becomes:
      \begin{align}
        \E[\psi(\x)]
        &= \E\big[\frac1{np_I}\psi_I(\x)\big] =
          \frac1{2n}\sum_{i=1}^n(\a_i^\top\x-y_i)^2.
          \label{eq:least-squares}
      \end{align}
Moreover, we have $f(\x)-f(\x^*) =
      \frac1{2n}\|\A(\x-\x^*)\|^2=\frac12\|\x-\x^*\|_{\H}^2$, where
      $\H = \nabla^2f(\x) = \frac1{n}\A^\top\A$. Also, 
      \begin{align*}
        \nabla \psi_i(\x) = (\a_i^\top\x-y_i)\a_i,\qquad\nabla^2\psi_i(\x)=\a_i\a_i^\top.
      \end{align*}
      Naturally, since the Hessian is the same everywhere for this
      task, the local convergence neighborhood $U_f$ is simply the
      entire Euclidean space $\R^d$. Let
      us first recall our definition of the condition number for this task.
      Assumption~\ref{a:convex} states that each $\psi_i$ is 
      $\lambda$-smooth, i.e.,
      $\|\nabla^2\psi_i(\x)\|=\|\a_i\|^2\leq \lambda$ and $f$ is
      $\mu$-strongly convex, i.e., 
      $\lambda_{\min}(\H)=\frac1n\sigma_{\min}^2(\A)\geq\mu$, and the condition number of the
      problem is defined as $\kappa=\lambda/\mu\geq \max_i\{n\|\a_i\|^2\}/\sigma_{\min}^2(\A)$. Can we
      reduce the condition number of this problem by importance sampling?

Consider the following naive strategy which can be applied
      directly with our convergence result. Here, we let the
      importance sampling probabilities be $p_i\propto
      \|\a_i\|^2$, so that the smoothness of the new reweighted problem
      will be $\tilde\lambda=\frac1n\sum_{i=1}^n\|\a_i\|^2$.
      In other words, it will be
      the average smoothness of the original problem, instead of the
      worst-case smoothness. Such importance sampling strategy can
      theoretically be applied to a general finite-sum
      minimization task with some potential gain, however we may
      need different sampling probabilities at each step. For least
      squares, the resulting condition number is
      $\tilde\kappa =
      \tilde\lambda/\mu = (\sum_i\|\a_i\|^2)/\sigma_{\min}^2(\A)$. This is still worse
      than what we claimed for least squares, but it is still
      potentially much better than $\kappa$.

      Next, we will show that by slightly adapting our convergence
      analysis, we can use leverage score sampling to further improve
      the convergence of SVRN for the least squares task. Recall that
      the $i$th leverage score of $\A$ is defined as
      $\ell_i=\|\a_i\|_{(\A^\top\A)^{-1}}^2=\frac1n\|\a_i\|_{\H^{-1}}^2$,
      and the laverage scores satisfy $\sum_{i=1}^n\ell_i=d$. This result will
      require showing a specialized version of Lemma \ref{l:gradient},
      which bounds the error in the variance-reduced subsampled
      gradient. In this case we show a
      bound in expectation, instead of with high probability.
      \begin{lemma}\label{l:gradient-ls}
        Suppose that $f$ defines a least squares task
        \eqref{eq:least-squares} and the sampling probabilities
        satisfy $p_i\geq \|\a_i\|_{(\A^\top\A)^{-1}}^2/(Cd)$. Then,
       $\gbh(\x)=\frac1m\sum_{i=1}^m\frac1{np_{I_i}}\nabla\psi_{I_i}(\x)$,
       where $I_1,...,I_m\sim p$, satisfies:
       \vspace{-2mm}
        \begin{align*}
          \E\,\|\gbh(\x)-\gbh(\x^*)-\nabla f(\x)\|_{\H^{-1}}^2 \leq
          C\frac {d}m\cdot\|\x-\x^*\|_{\H}^2.
        \end{align*}
      \end{lemma}
      \begin{proof}
 We define $\v_i =
        \frac1{np_{I_i}}(\nabla\psi_{I_i}(\x)-\nabla\psi_{I_i}(\x^*))-\nabla
        f(\x)$. Note that $\E[\v_i]=\zero$, so we have:
        \begin{align*}
          \E\,\|\gbh(\x)-\gbh(\x^*)-\nabla f(\x)\|_{\H^{-1}}^2
          &=\E\,\Big\|\frac1m\sum_{i=1}^m\v_i\Big\|_{\H^{-1}}^2
=\frac1m \E\,\|\v_1\|_{\H^{-1}}^2
          \\
          &\leq
            \frac1m\E\,\frac1{n^2p_{I_1}^2}\|\nabla\psi_{I_1}(\x)-\nabla\psi_{I_1}(\x^*)\|_{\H^{-1}}^2
          \\
          &=\frac1m\E\,\frac{\|\a_{I_1}\|_{\H^{-1}}^2}{n^2p_{I_1}^2}\big(\a_{I_1}^\top(\x-\x^*)\big)^2
          \\
          &\leq \frac
            {1}m\,Cd\cdot\E\,\frac{(\a_{I_1}^\top(\x-\x^*))^2}{np_{I_1}}
            = C\cdot\frac dm\|\x-\x^*\|_{\H}^2,
        \end{align*}
        where we used that $\|\a_i\|_{\H^{-1}}^2 =
        n\,\|\a_i\|_{(\A^\top\A)^{-1}}^2\leq Cndp_i$.
      \end{proof}
      Since the above bound is obtained in expectation, to insert it
      into our high probability analysis, we apply
      Markov's inequality. Namely, it holds with probability
      $1-\delta$ that:
      \begin{align*}
        \|\gbh(\x)-\gbh(\x^*)-\nabla f(\x)\|_{\H^{-1}}^2\leq
        \frac {Cd}{\delta m}\|\x-\x^*\|_{\H}^2.
      \end{align*}
      Compared to Lemma \ref{l:gradient}, the dependence on the condition number $\kappa$ is
      completely eliminated in this result.  Letting $m=n/\log(n/d)$ and the number
      of local iterations of SVRN to be $t=O(\log(n/d))$, we can
      apply the union bound argument from the proof of Theorem \ref{t:svrn} by
      letting $\delta=1/(Ct)$, so that with probability
      $1-1/C$, one stage of leverage score sampled  SVRN satisfies:
      \begin{align*}
        f(\xbt_{s+1})-f(\x^*) &\leq \rho \cdot
        \big(f(\xbt_s)-f(\x^*)\big)
        \qquad\text{for}\qquad \rho=O\Big(\frac{d\log^2(n/d)}{n}\Big).
      \end{align*}
      Alternatively, our main convergence analysis can be
      adapted (for least squares) to convergence in expectation,
      obtaining that \ $\E[f(\xbt_{s+1})-f(\x^*)] \leq \tilde\rho \cdot
        \E[f(\xbt_s)-f(\x^*)]$ \ for \ $\tilde\rho = O(d\log(n/d)/n)$.

      The time complexity stated in Theorem \ref{c:least-squares}
      comes from the fact that constructing a preconditioning matrix
      $\Hbt$ that is an $\alpha$-approximation of $\H$ with
      $\alpha=O(1)$,  together with approximating the
      leverage scores, takes $O(nd\log n + d^3\log d)$
      \cite{fast-leverage-scores},
      whereas one stage of SVRN takes $O(nd + d^2\log(n/d))$. Here, the
      preconditioning matrix can be formed by applying a $k\times n$ sketching
transformation $\S$ to the data matrix $\A$, and then computing the
Hessian estimate $\frac1n\A^\top\S^\top\S\A\approx \H$. For example, if we use the
Subsampled Randomized Hadamard Transform \cite[SRHT]{ailon2009fast},
then it suffices to use $k=O(d\log d)$. Finally, 
      the initial iterate $\xbt_0$ can be constructed using the same
      sketching transformation via the so-called sketch-and-solve
      technique \cite{sarlos-sketching}:
      \[\xbt_0 =
        \argmin_\x\|\S\A\x-\S\y\|^2.\]
      With $k=O(d\log d)$, this initial
      iterate will satisfy $f(\xbt_0)\leq O(1)\cdot f(\x^*)$, so 
      the number of iterations of SVRN needed to obtain
      $f(\xbt_s)\leq(1+\epsilon)f(\x^*)$ is only
      $s=O\big(\frac{\log(1/\epsilon)}{\log(n/d)}\big)$.
      
      We note
      that another way to implement SVRN with approximate leverage
      score sampling is to first precondition the entire least squares
      problem with a Randomized Hadamard Transform (i.e., SRHT without
      the subsampling):
      \begin{align}
        \tilde\A=\H\D\A\qquad\text{ and }\qquad\tilde\y=\H\D\y,\label{eq:rht}
      \end{align}
      where $\H$ is a
      Hadamard matrix scaled by $1/\sqrt n$ and $\D$ is a diagonal
      matrix with random sign entries. This is a popular technique in
      Randomized Numerical Linear Algebra \cite{woodruff2014sketching,DM16_CACM,dpps-in-randnla}. The cost of
      this transformation is $O(nd\log n)$, thanks to fast Fourier
      transform techniques, and the resulting least squares task is 
      equivalent to the original one, because
      $\|\tilde\A\x-\tilde\y\|^2 = \|\A\x-\y\|^2$ for all $\x$.
      Moreover, with high probability, all of the leverage scores of
      $\tilde \A$ are nearly uniform, so, after this
      preconditioning, we can simply implement SVRN with uniform
      gradient subsampling and still enjoy the condition-number-free
      convergence rate from Theorem \ref{c:least-squares}. This
      strategy is as effficient as direct leverage score
      sampling when $\A$ is a dense matrix, but it is
      less effective when we want to exploit data sparsity.

      \begin{figure*}[!t]
\vspace{-7mm}\centering     
\subfigure[Synthetic LR, $\kappa_{\A}^2 = 1$]{\label{fig:lr-cond-1}\includegraphics[width=.45\textwidth]{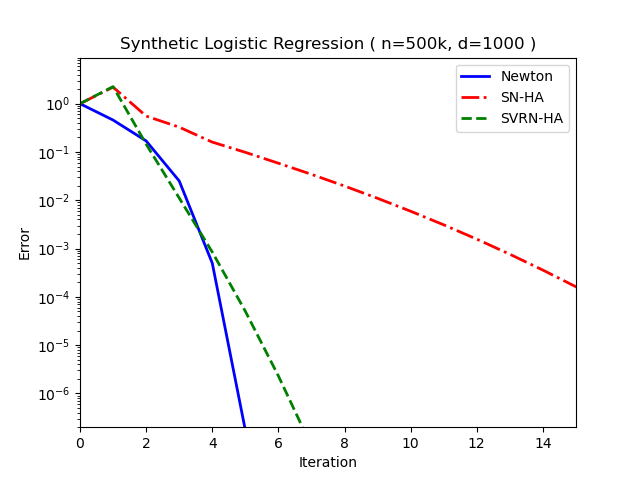}}
\subfigure[Synthetic LR, $\kappa_{\A}^2
=10$]{\label{fig:lr-cond-10}\includegraphics[width=.45\textwidth]{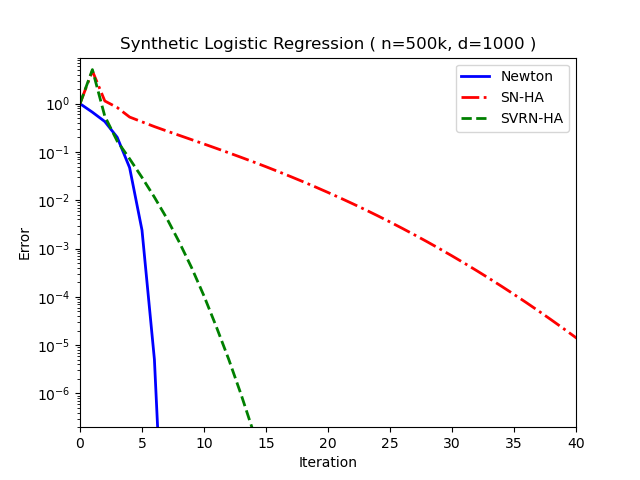}}
\\[-3mm]
\subfigure[CIFAR-10, $d = 500$]{\label{fig:lr-cond-1}\includegraphics[width=.45\textwidth]{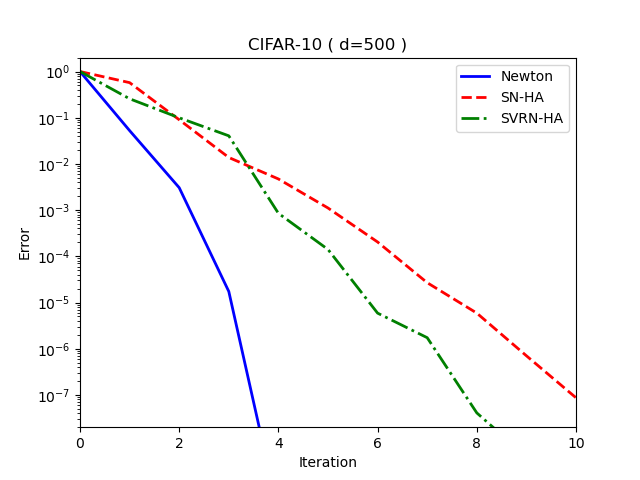}}
\subfigure[CIFAR-10, $d = 1000$]{\label{fig:lr-cond-10}\includegraphics[width=.45\textwidth]{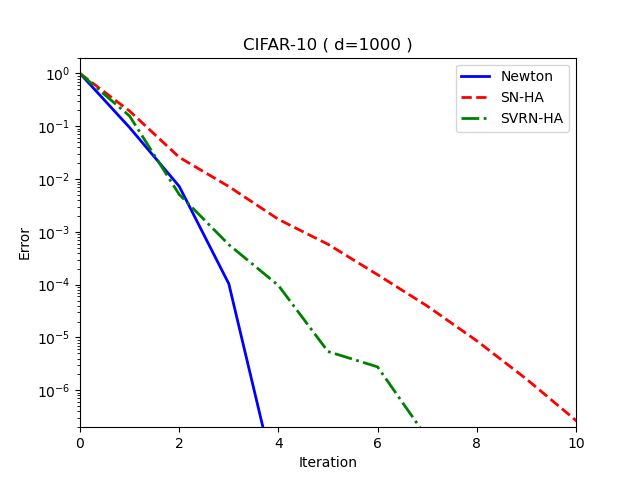}}
\caption{Convergence comparison of SVRN-HA against SN-HA and Newton
  for a synthetic logistic regression task as we vary the
  condition number of the data matrix, and for the CIFAR-10 dataset.}\label{fig:lr-cond}
\end{figure*}

\section{Further experimental details}
\label{a:experiments}

In this section we provide additional details regarding our
experimental setup in Section~\ref{s:experiments}, as well as some
further results on logistic regression with several datasets.

As a dataset, in Section \ref{s:experiments}, we used the Extended
MNIST dataset of handwritten digits \cite[EMNIST]{cohen2017emnist} with
$n=500$k datapoints.
Here, we also include results on the CIFAR-10 image dataset with
$n=50$k datapoints. Both
datasets are preprocessed in the same way:
Each image is transformed by a random features map
that approximates a Gaussian kernel having width $0.002$, and we
partitioned the classes into two labels 1 and -1. We considered two
feature dimensions: $d=500$ and $d=1000$, and we used the
regularization parameter $\gamma = 10^{-8}$. To measure the error in the convergence plots, we use
$\|\x_t-\x^*\|_{\H}^2/\|\x_0-\x^*\|_{\H}^2$, where $\H=\nabla^2
f(\x^*)$.

We next present further results, studying the convergence properties
of SVRN on synthetic datasets with varying properties, for the logistic
regression task as in \eqref{eq:lr}.
To construct our synthetic data matrices, we first generate an $n\times
d$ Gaussian matrix $\G$, and let $\G=\U\D\V$ be the reduced SVD of
that matrix (we used $n=500\text{k}$ and $d=1000$). Then, we replace diagonal matrix $\D$ with a matrix
$\tilde\D$ that has singular values spread linearly from 1 to
$\kappa_{\A}$. We then let $\A=\U\tilde\D\V$ be our data matrix. 
To generate the vector
$\y$ for logistic regression, we first draw a random vector $\x \sim\Nc(\zero,1/d\cdot\I_d)$,
and then we let $\y = \mathrm{sign}(\A\x)$.

For the least squares tasks in Section \ref{s:experiments-ls}, we generated the same synthetic matrices
$\A$, but with the
vector $\y$ generated as follows: $\y=\A\x+\xib$ where $\xib$ is the
Gaussian noise $\xib \sim \Nc(\zero,0.1\cdot\I_n)$.  Here, we
observed little difference in convergence behavior when varying
$\kappa_{\A}$ (we show the results for $\kappa_{\A}=10^3$).

\paragraph{Logistic regression with varying condition number.}

To supplement the EMNIST logistic regression experiments in
Section~\ref{s:experiments}, we present 
convergence of SVRN-HA on the CIFAR-10 dataset, as well as for the synthetic logistic
regression task while varying the squared condition number $\kappa_{\A}^2$ of the data
matrix. Note that, while $\kappa_{\A}^2$ is not the same as the
condition number of the finite-sum minimization problem, it is
correlated with it, by affecting the convexity and smoothness of
$f$. From Figure \ref{fig:lr-cond}, we observe that SVRN-HA 
outperforms SN-HA for both values of the data condition
number. However, the convergence of both algorithms gets noticeably
slower after increasing $\kappa_{\A}^2$, while it does not have as
much of an effect on the Newton's method. Given that the increased
condition number affects both methods similarly, we expect that the
degradation in performance is primarily due to worse Hessian
approximations, rather than increased variance in the gradient
estimates. This may be because we are primarily affecting the global convexity
of $f$, as opposed to the smoothness of individual components $\psi_i$. See our
high-coherence least squares experiments for a discussion of how the
smoothness of component functions affects the performances of SVRN and
SN very differently.

\section{Related work on Subsampled Newton}
\label{a:related}

In this section, we discuss several important prior works on Subsampled
Newton methods to put our results in context. Specifically, we aim to illustrate
how the Hessian approximation condition used 
in Theorems \ref{t:informal} and \ref{t:svrn}, i.e.,
$\frac1{\sqrt\alpha}\nabla^2f(\x)\preceq\Hbt\preceq\sqrt\alpha\nabla^2f(\x)$,
relates to the Hessian estimates used in this line of works
when showing fast local convergence rates. Also, in
Appendix~\ref{a:matrix-bernstein}, we show that to recover our
condition with 
$\alpha\leq 2$ via uniform Hessian subsampling, one needs
$O(\kappa\log(d))$ samples.  Throughout this section, we 
use notation from the respective references.

First, we consider the Hessian averaging method studied by
  \cite{hessian-averaging}. It is important to distinguish between the
  condition they impose on the stochastic Hessian oracle $\Hbh$, and
  the Hessian approximation guarantee that they obtain for the actual
  estimate $\Hbt_t$ that they use (and that we use in SVRN-HA). The
  stochastic oracle is only required to have a 
  sub-exponential tail (see their Assumption 2.1, and also Example 2.3
  illustrating this for Subsampled Newton). However, the actual
  estimate $\Hbt_t$ is a result of averaging many samples from that
  oracle. Their local convergence analysis is only deployed once
  enough oracle samples are averaged so that $\Hbt_t$
 achieves the approximation guarantee given in their Lemma 3.5, i.e.,
 $(1-\psi)\H_t\preceq\Hbt_t\preceq(1+\psi)\H_t$. This
 approximation guarantee is strictly stronger than ours, but it
 becomes equivalent once $\alpha\leq 2$.

 Next, we consider \cite{roosta2019sub} which analyzes a broad
  class of Subsampled Newton methods. In this paper, the most
  relevant results are Lemma~2 (Hessian approximation guarantee) and
  Theorem~5 (local convergence result). The guarantee reduces to our
  condition with $\alpha\leq 2$, with the only difference being that their Hessian
  approximation is restricted to the “cone of feasible directions”,
  defined in (3). This restriction is only present in a constrained
  optimization setting (we focus on unconstrained optimization). The
  lemma also shows that the required Hessian sample size is again
  larger than the condition number of the problem (their condition
  number $\kappa_1$ matches our $\kappa$ for unconstrained
  optimization). 

  Next, we look at \cite{bollapragada2018exact} which presents a
  convergence analysis of Subsampled Newton under slightly different
  assumptions. Here, the key statements for local convergence
  are Lemma 2.4 in the journal version (Lemma 2.3 in arxiv), and
  equation (2.17). The lemma gives an approximation guarantee for the
  subsampled Hessian. This guarantee is in some sense weaker than our
  condition, because it only requires the Hessian approximation to be
  good in one direction, i.e., $w_k-w^*$ in their notation. However,
  examining the convergence bound in (2.17), for the local convergence
  analysis to hold, the Hessian sample size must still satisfy
  $|S_k|\geq\sigma^2/\bar\mu^2$ where $\sigma^2$ is effectively the
  upper bound on the component Hessians (potentially as large as our  
$\lambda$-smoothness) and $\bar\mu$ is the lower bound on the
component Hessians. The latter is effectively a component-wise strong
convexity constant, which can be much smaller (i.e., worse) than our
global strong convexity  $\mu$. In summary, their Hessian
approximation condition for local convergence analysis, while slightly
different, also requires the Hessian sample size to be larger than a
condition number of the problem. Their condition number can be much
larger than our condition number $\kappa$, or even infinite (for
problems as simple as least squares), and is less standard in the
literature. 

Finally, we examine \cite{erdogdu2015convergence}, where the
  authors consider Subsampled Newton with a possibly low-rank
  approximation of the Hessian. The most relevant result  in that work
  is Lemma 3.1. Here, the standard version of Subsampled Newton is
  recovered when we let  $Q^t=H_{S_t}^{-1}$. Then, the standard
  Hessian approximation condition appears implicitly through the fact
  that  $\xi_1^t$ has to be less than 1 for the bound to be
  non-vacuous. To see the condition more clearly, we point to Equation
  (B.1) in the appendix (Equation A.1 in the arxiv version), which
  requires that  $\|Q^t\|\cdot\|H_{S_t}-H\|<1$. For
  $Q^t=H_{S_t}^{-1}$, this is essentially equivalent to our
  condition with $\alpha\leq 2$. Also, from the bound in Lemma 3.1, we see that once again
  the condition requires Hessian sample size to be larger than a
  condition number of the problem (which is for them $K\|Q^t\|$, and
  after some effort, this can be seen as comparable to our condition
  number). In the main algorithm of the paper, NewSamp, the authors
  aim to reduce the required sample size by using a different $Q^t$
computed from a low-rank approximation of
 $H_{S_t}$. This roughly corresponds to constructing a Hessian
 $\alpha$-approximation with $1\ll\alpha\ll\kappa$.

\section{Omitted proofs}
\label{a:proofs}

Here, we include the proofs of the auxiliary results stated in the paper.
First, we discuss in
detail the global convergence analysis of SVRN-HA
(Theorem~\ref{t:global}). Then, we illustrate how the Hessian approximation
required in Theorem~\ref{t:svrn} can be obtained via
subsampling.

\subsection{Global convergence of SVRN-HA}

Here, we show how the proof of Theorem \ref{t:global}, i.e., global
convergence of SVRN-HA (Algorithm \ref{alg:svrn}), follows from
global convergence analysis of Hessian averaging
\cite{hessian-averaging}. They show in Lemma 3.5 that if we were to
run the global phase of SVRN-HA exclusively, then for any
$\epsilon,\delta\in(0,1)$ there is $T:=T(\epsilon,\delta)$ such that with probability
$1-\delta$ for all $s\geq T$ we have
$\xbt_s\in U_f(\epsilon)$, $\Hbt_s\approx_\epsilon\nabla^2 f(\xbt_s)$,
and $\eta_s=1$. This means that, for any $\epsilon$, the probability
that the above event does not happen with any $T<\infty$ is less than any
$\delta>0$, so it must be $0$. This implies that SVRN-HA will
eventually switch to the local phase (i.e., to SVRN). Note that it is
possible that the switch will occur before the local neighborhood and
Hessian approximation conditions are met. But if this causes SVRN to
produce a poor descent direction, it will be caught by the line
search (resulting in $\eta_s<1$) and the method will simply revert back
to the global phase. Eventually, the global phase will ensure that
both conditions are met, and we can rely on Theorem~\ref{t:svrn} for
the local convergence analysis.

\subsection{Hessian approximation via subsampling}
\label{a:matrix-bernstein}

Here, we illustrate how the Hessian $\alpha$-approximation
condition \eqref{eq:alpha}, used in  Theorems~\ref{t:informal} and \ref{t:svrn}, can be obtained via
uniformly subsampling $O(\kappa\log d)$ component Hessians. This
result follows from Bernstein's concentration inequality for
random matrices, given below \cite{matrix-tail-bounds}. 
\begin{lemma}[Matrix Bernstein's inequality]\label{l:matrix-bernstein}
  Let $\Z_1,...,\Z_k$ be independent random symmetric $d\times d$ matrices 
    such that $\frac1k\sum_i\E[\Z_i]=\bar\Z$. Suppose that:
    \begin{align*}
    \big\|\frac1k\sum_i\E[(\Z_i-\E[\Z_i])^2]\big\|\leq\bar\sigma^2\quad\text{and}\quad  \|\Z_i-\E[\Z_i]\|\leq R.
    \end{align*}
    Then, for any $\epsilon\geq 0$
    \begin{align*}
      \Pr\bigg(\Big\|\frac1k\sum_{i=1}^k\Z_i-\bar\Z\Big\|\geq \epsilon\bigg)\leq
      2d\cdot\exp\Big(-\frac{\epsilon^2k/2}{\bar\sigma^2+\epsilon R/3}\Big).
    \end{align*}
  \end{lemma}

  We are now ready to show the approximation guarantee for a
  subsampled Hessian estimate.
  \begin{lemma}
    Suppose Assumption \ref{a:convex}, and let $\Dc$ be the
    sampling distribution for component functions $\psi$, as in
    Theorem~\ref{t:svrn}.
    Let $\psi_1,...,\psi_k\sim\Dc$ be i.i.d. samples from this
    distribution. There is an absolute constant $c$ 
    such that for 
    any $\x\in\R^d$, with
    probability $1-\delta$, the matrix
    $$\Hbt=\Big(1+\frac\gamma\mu\Big)^{-1/2}\Big(\frac1k\sum_{i=1}^k\nabla^2\psi_i(\x)
    +\gamma\I\Big),\qquad\text{with}\quad \gamma =
    \max\Big\{12\lambda\log(2d/\delta)/k,\mu\Big\},$$
    is an $\alpha$-approximation of $\nabla^2f(\x)$ as in \eqref{eq:alpha}
    with
    $$\alpha = 1 + O\Big(\kappa\log(d/\delta)/k + \sqrt{\kappa\log(d/\delta)/k}\Big).$$
  \end{lemma}

    \begin{proof}
    Let $\H_\gamma=\nabla^2f(\x)+\gamma\I$ for some $\gamma\geq 0$. We will use Lemma \ref{l:matrix-bernstein} with $\Z_i =
    \H_\gamma^{-1/2}\nabla^2\psi_i(\x) \H_\gamma^{-1/2}$. First, note that
    $\E[\nabla^2\psi_i(\x)]=\nabla^2f(\x)$ so that
    $\bar\Z=\E[\Z_i]=\H_\gamma^{-1/2}\nabla^2f(\x)\H_\gamma^{-1/2}\preceq
    \I$. Next, using that
    $\|\H_\gamma^{-1}\|\leq 1/(\mu+\gamma)$, we compute the boundedness parameter $R$: 
    \begin{align*}
      \|\Z_i-\E[\Z_i]\|\leq 
      \|\H_\gamma^{-1/2}\nabla^2\psi_i(\x)\H_\gamma^{-1/2}\|+1\leq \frac{2\lambda}{\mu+\gamma}=:R.
    \end{align*}
    Now, we similarly bound the variance parameter $\bar\sigma^2$:
    \begin{align*}
      \big\|\E[(\Z_i-\E[\Z_i])^2]\big\|\leq \big\|\E[\Z_i^2]\big\|\leq
      \big\|\E\big[\|\Z_i\|\Z_i\big]\big\|\leq \frac{\lambda}{\mu+\gamma}\|\E[\Z_i]\|\leq \frac{\lambda}{\mu+\gamma}=:\bar\sigma^2.
    \end{align*}
    Thus, Lemma \ref{l:matrix-bernstein} implies that if $k\geq
    \frac{3\lambda}{\mu+\gamma}\log(2d/\delta)/\epsilon^2$ then with probability
    $1-\delta$ the Hessian estimate
    $\tilde\H=\big(1+\frac\gamma\mu\,\big)^{-1/2}\big(\frac1k\sum_{i=1}^k\nabla^2\psi_i(\x)+\gamma\I\big)$ satisfies:
    \begin{align*}
      \Big\|\H_\gamma^{-1/2}\Big(1+\frac\gamma\mu\,\Big)^{1/2}\,\Hbt\H_\gamma^{-1/2} - \I\Big\| =
      \Big\|\frac1k\sum_{i=1}^k\Z_i - \bar\Z\Big\|\leq \epsilon.
    \end{align*}
    We can rewrite this as:
    $$(1-\epsilon)\I\preceq
    \H_\gamma^{-1/2}\Big(1+\frac\gamma\mu\,\Big)^{1/2}\,\Hbt\H_\gamma^{-1/2}\preceq(1+\epsilon)\I,$$
    which is equivalent to:
    $$(1-\epsilon) \Big(1+\frac\gamma\mu\,\Big)^{-1/2}\,\H_\gamma\preceq
    \Hbt\preceq(1+\epsilon) \Big(1+\frac\gamma\mu\,\Big)^{-1/2}\,\H_\gamma.$$
    Moreover, note that the regularized Hessian $\H_\gamma$ satisfies:
    \begin{align*}
      \nabla^2 f(\x) \preceq \H_\gamma = \nabla^2 f(\x)+\frac\gamma\mu\,\mu\I \preceq \Big(1+\frac\gamma\mu\Big)\nabla^2 f(\x).
    \end{align*}
Putting this together, and assuming that $\epsilon\leq 1/2$, for $k\geq
\frac{3\lambda}{\mu+\gamma}\log(2d/\delta)/\epsilon^2$ we get: 
    \begin{align*}
\frac1{\sqrt\alpha}\nabla^2f(\x)\preceq \tilde\H\preceq
      \sqrt\alpha\nabla^2 f(\x),\qquad\text{with}\quad\alpha =
      (1+2\epsilon)^2\Big(1+\frac\gamma\mu\Big). 
    \end{align*}
    Thus, if we set
    $\gamma=\max\{12\lambda\log(2d/\delta)/k,\mu\}$, then there are
    two cases:
    \begin{enumerate}
      \item If $k\leq 12\kappa\log(2d/\delta)$, then we have
    $k=12\frac\lambda\gamma\log(2d/\delta)\geq
    \frac{3\lambda}{\mu+\gamma}\log(2d/\delta)/\epsilon^2$ for $\epsilon=1/2$, which
    implies that $\tilde\H$ is an $\alpha$-approximation
    of $\nabla^2f(\x)$ with $\alpha = O(\kappa\log(d/\delta)/k)$.
  \item If $k\geq 12\kappa\log(2d/\delta)$, then let $\epsilon =
    \sqrt{\gamma/2\mu}$, so that we have
    $k=12\frac\lambda\gamma\log(2d/\delta) = 3\kappa\log(2d/\delta)/\epsilon^2$,
    which
    implies that $\tilde\H$ is an $\alpha$-approximation of
    $\nabla^2f(\x)$ with $\alpha = (1+2\epsilon)^3 = O(1+\sqrt{\kappa\log(d/\delta)/k})$.
  \end{enumerate}
  \end{proof}

\end{document}